\author{Mar\'{\i}a de la Paz Tirado Hern\'andez}
\newtheorem{teo}{Theorem}[section]
\newtheorem{prop}[teo]{Proposition}
\newtheorem{cor}[teo]{Corollary}
\newtheorem{lem}[teo]{Lemma}
\newtheorem{Def}[teo]{Definition}
\newtheorem{nota}[teo]{Remark}
\newtheorem{ej}[teo]{Examples}
\newenvironment{proof}{
\noindent {\bf  Proof.}\ }{\hfill $\square$\vspace{1ex}\par}
\DeclareMathOperator{\supp}{\rm supp}
\DeclareMathOperator{\Sym}{\rm Sym}
\DeclareMathOperator{\C}{\rm \mathcal C}
\DeclareMathOperator{\s}{\rm \mathcal S}
\DeclareMathOperator{\T}{\rm \mathcal T}
\DeclareMathOperator{\gr}{\rm gr} 
\DeclareMathOperator{\End}{End}
\DeclareMathOperator{\U}{\mathcal{U}}
\DeclareMathOperator{\IDer}{\rm IDer}
\DeclareMathOperator{\Der}{\rm Der}
\DeclareMathOperator{\DD}{\mathcal{D}}
\DeclareMathOperator{\HS}{\rm HS}
\DeclareMathOperator{\Id}{\rm Id}
\DeclareMathOperator{\II}{\rm \mathbb I}
\newcommand{\N}{\mathbb{N}}
\newcommand{\Z}{\mathbb{Z}}
\newcommand{\Q}{\mathbb{Q}}
\newcommand{\pcirc}{{\scriptstyle \,\circ\,}}
\newcommand{\sbullet}{{\hspace{.1em}\scriptstyle\bullet\hspace{.1em}}}
\newcommand{\bfs}{{\bf s}}
\newcommand{\bft}{{\bf t}}
\author{Luis Narv\'{a}ez Macarro and Mar\'{\i}a de la Paz Tirado Hern\'andez\thanks{Boths authors are partially supported by MTM2016-75027-P, US-1262169
 and FEDER. Second author is partially suported by P12-FQM-2696. }}
\title{On the bracket of integrable derivations}
\date{January 2021}
\begin{document}

\maketitle

\begin{abstract}

We prove that any multi-variate Hasse--Schmidt derivation can be
decomposed in terms of substitution maps and uni-variate
Hasse--Schmidt derivations. As a consequence we prove that the
bracket of two $m$-integrable derivations is also $m$-integrable,
for $m$ a positive integer or infinity.

\bigskip

\noindent Keywords: Hasse--Schmidt derivation, integrable
derivation, differential operator, substitution map.

\noindent {\sc MSC: 14F10, 13N10, 13N15.}
\end{abstract}

\section*{Introduction}

Let $k$ be a commutative ring and $A$ a commutative $k$-algebra.
Given a positive integer $m$, or $m=\infty$,
 a $k$-linear derivation $\delta:A \to A$ is said to be $m$-integrable if
it extends up to a Hasse--Schmidt derivation
$D=(\Id,D_1=\delta,D_2,\dots)$ of $A$ over $k$ of length $m$. This
condition is automatically satisfied for any $m$ if $k$ contains the
rational numbers and $A$ is arbitrary, or if $k$ is arbitrary and
$A$ is a smooth $k$-algebra. The set $\IDer_k(A;m)$ of
$m$-integrable derivations of $A$ over $k$ is an $A$-module. A
natural question, suggested for instance by \cite[\S 3]{nar_2012}
and \cite{Ti2},
 is whether the (Lie) bracket $[\delta,\varepsilon]= \delta\varepsilon-\varepsilon\delta$ of two $m$-integrable derivations $\delta, \varepsilon$ is $m$-integrable or not, in the case of course where $\IDer_k(A;m) \subsetneq \Der_k(A)$. 
 The fact that the modules $\IDer_k(A;m)$ are closed under Lie brackets seems like a very basic property, necessary for any reasonable behavior that we can expect of these objects as differential invariants of singularities in nonzero characteristics, and as far as we know it has not been proven in the existing literature.
\medskip

 If we take two $m$-integrals of our derivations
$$ D=(\Id,D_1=\delta,D_2,\dots),\quad E=(\Id,E_1=\varepsilon,E_2,\dots),
$$
their commutator (in the group of Hasse--Schmidt derivations of
length $m$) has the form
$$
D \pcirc E\pcirc D^* \pcirc E^* =
(\Id,0,[D_1,E_1]=[\delta,\varepsilon],\dots ),
$$
where $D^*$ denotes the inverse of $D$ for the group structure of Hasse--Schmidt derivations,
but it is not clear how to produce a Hasse--Schmidt derivation of
length $m$ such that its $1$-component is $[\delta,\varepsilon]$, if
it exists.
\medskip

In this paper we show how multi-variate Hasse--Schmidt derivations
allow us to answer the above question. Let us see what happens in
the simple case of length $m=2$. Consider the external product
$F=D\boxtimes E =( F_{(i,j)})_{0\leq i,j\leq 2}$, with $ F_{(i,j)} =
D_i \pcirc E_j$, which is a 2-variate Hasse--Schmidt derivation, and
the composition
$$
G= (D\boxtimes E) \pcirc (D^*\boxtimes E^*).
$$
First, one checks that $G_{(1,0)} = G_{(2,0)} = G_{(0,1)} =
G_{(0,2)} = 0$, and from there we deduce easily that the
``restriction of $G$ to the diagonal'', i.e.
$G'=\left(G_{(0,0)}=\Id, G_{(1,1)}, G_{(2,2)}\right)$, is a
(uni-variate) Hasse--Schmidt derivation of length $2$. But
$G_{(1,1)}$ turns out to be $[D_1,E_1]=[\delta,\varepsilon]$, and so
$[\delta,\varepsilon]$ is $2$-integrable. Actually, the explicit
expression of $G_{(2,2)}$ is
\begin{eqnarray*}
& D_2 \pcirc E_2 - D_2 \pcirc E_1^2  - D_1 \pcirc E_2 \pcirc D_1 +
D_1 \pcirc E_1 \pcirc D_1 \pcirc E_1 + E_2 \pcirc D_1^2 -  E_2
\pcirc D_2 - E_1 \pcirc D_1^2  \pcirc E_1 + E_1 \pcirc D_2 \pcirc
E_1.
\end{eqnarray*}

In order to generalize the above idea to arbitrary length, we need a
decomposition result which allows us to express any
$\Delta$-variate Hasse--Schmidt derivation $D$, for $p\geq 1$
and $\Delta\subset \N^p$ a finite co-ideal, as the ordered
composition (remember that the group of $\Delta$-variate
Hasse--Schmidt derivations under composition is not abelian in
general)  of a totally ordered finite family of $\Delta$-variate
Hasse--Schmidt derivations, each one obtained as the action of a
monomial substitution map on a uni-variate Hasse--Schmidt
derivation. When $\Delta$ is infinite, a similar result holds, but
our totally ordered familly becomes infinite. Moreover, the above
decomposition is unique if we fix the substitution maps we are
using, and it is governed by the arithmetic combinatorics of $\N^p$ (see Theorem \ref{Descomposicion} for more details). We think that such a decomposition is interesting in itself: it can be understood as a structure theorem of multi-variate Hasse--Schmidt derivations.
\medskip

Let us comment on the content of the paper.

In Section 1, we recall the basic notions, constructions and
notations about Hasse--Schmidt derivations, substitution maps and
integrability.
\medskip

In Section 2,  we describe an arithmetic partition of $\N^p
\setminus \{0\}$, we define a total ordering on it and we study its
behavior with respect to the addition in $\N^p$.
\medskip

Section 3 contains the main results of this paper, namely the
decomposition theorem of multi-variate Hasse--Schmidt derivations in
terms of uni-variate Hasse--Schmdit derivations and substitution
maps (see Theorem \ref{Descomposicion}), and the answer of the
motivating question of this paper: the bracket of $m$-integrable
derivations is $m$-integrable too (see Corollary \ref{El conmutador
es integrable}).
\medskip

In Section 4, we apply the previous results to exhibit a natural
Poisson structure on the divided power algebra of the module of
integrable derivations, and we prove its compatibility with the
canonical Poisson structure of the graded ring of the ring of
differential operators by means of the map $\vartheta_{A/k}$ of
\cite[Section (2.2)]{nar_2009}.

\section{Preliminaries and Notations}

Throughout this paper, $k$ will be a commutative ring and $A$ a
commutative $k$-algebra, and in this section $M$ will be an abelian
group and $R$ a ring, not-necessarily commutative.
\medskip

Let $p\geq 1$ be an integer. The monoid $(\N^p,+)$ is endowed with a
natural partial ordering: for $\alpha,\beta\in \N^p$,
$$ \alpha \leq \beta\quad \stackrel{\text{def.}}{\Longleftrightarrow}\quad \exists \gamma \in \N^p\ \text{such that}\ \beta = \alpha +
\gamma\quad \Longleftrightarrow\quad \forall i=1\dots,p, \quad
\alpha_i \leq \beta_i.
$$
Let $\N^p_+:=\N^p \setminus \{(0,\ldots,0)\}$ and let
$|\alpha|:=\alpha_1+\cdots+\alpha_p$ for any $\alpha\in \N^p$.

\medskip

Let $\bfs = \{s_1,\dots,s_p\}$ be a set of $p$ many variables. The
abelian group $M\llbracket\bfs\rrbracket$ will be always considered
as a topological $\Z\llbracket\bfs\rrbracket$-module with the
$\langle \bfs \rangle$-adic topology.

\begin{Def} \label{def:ideal-co-ideal}
We say that a subset $\Delta \subset \N^p$ is  a {\em co-ideal} of
$\N^p$ if $\alpha'\in\Delta$ whenever $\alpha'\leq \alpha$ and
$\alpha\in\Delta$.
\end{Def}

For each co-ideal $\Delta \subset \N^p$, we denote by $\Delta_M$ the
closed sub-group of $M\llbracket\bfs\rrbracket$ whose elements are
the formal power series $\sum_{\alpha\in\N^p} m_\alpha \bfs^\alpha$
such that $m_\alpha=0$ whenever $\alpha\in \Delta$, and
$M\llbracket\bfs\rrbracket_\Delta :=
M\llbracket\bfs\rrbracket/\Delta_M$. Any element $m\in
M\llbracket\bfs\rrbracket_\Delta$ can be written in a unique way
$m=\sum_{\alpha\in \Delta} m_\alpha \bfs^\alpha$, and its support is
$\supp(m) = \{ \alpha \in \Delta\ |\  m_\alpha \neq 0\} \subset
\Delta$. Let us notice that $M\llbracket\bfs\rrbracket_{\N^p} =
M\llbracket\bfs\rrbracket$ (the case of $\Delta = \N^p$).
\medskip

If $M$ is a ring, say $M=R$,  then $\Delta_R$ is a closed two-sided
ideal of $R\llbracket\bfs\rrbracket$ and so
$R\llbracket\bfs\rrbracket_\Delta$ is a topological ring, which we
always consider endowed with the $\langle \bfs \rangle$-adic
topology ($=$ to the quotient topology).
\medskip

For non-empty co-ideals $\Delta' \subset \Delta$  of $\N^p$, we have
natural $\Z\llbracket\bfs\rrbracket$-linear projections
$\tau_{\Delta \Delta'}:M\llbracket\bfs\rrbracket_{\Delta}
\longrightarrow M\llbracket\bfs\rrbracket_{\Delta'}$, that we call
{\em truncations}:
$$\tau_{\Delta \Delta'} : \sum_{\scriptscriptstyle \alpha\in\Delta} m_\alpha \bfs^\alpha \in M\llbracket\bfs\rrbracket_{\Delta} \longmapsto \sum_{\scriptscriptstyle \alpha\in\Delta'} m_\alpha \bfs^\alpha \in
M\llbracket\bfs\rrbracket_{\Delta'}.
$$
If $M=R$ is a ring, then the truncations $\tau_{\Delta \Delta'}$ are
ring homomorphisms.
\medskip

We denote by $\U(R;\Delta)$ the multiplicative sub-group of the
units of $R\llbracket\bfs\rrbracket_\Delta$ whose 0-degree
coefficient is $1$. When $p=1$ and $\Delta=\{0,\dots,m\}$, we simply
denote $\U(R;m):=\U(R;\{0,\dots,m\})$. The multiplicative inverse
of a unit $r\in R\llbracket\bfs\rrbracket_\Delta$ will be denoted by
$r^*$. For $\Delta\subset \Delta'$ co-ideals we have
$\tau_{\Delta'\Delta}\left(\U(R;\Delta')\right) \subset
\U(R;\Delta)$ and the truncation map
$\tau_{\Delta'\Delta}:\U(R;\Delta') \to \U(R;\Delta)$ is a group
homomorphism. Clearly, we have:
\begin{equation} \label{eq:U-inv-limit-finite}
   \U(R;\Delta)
   = \lim_{\stackrel{\longleftarrow}{\substack{\scriptscriptstyle \Delta' \subset \Delta\\ \scriptscriptstyle  \sharp \Delta'<\infty}}} \U(R;\Delta').
\end{equation}

\medskip

\begin{Def}  \label{def:infinite-comp} Let $(I,\preceq)$ be a totally ordered set, possibly infinite, and $\mathbf{r}=(r_i)_{i\in I}$ a family of elements in $\U(R;\Delta)$. We say that this family is {\em composable} if for each finite co-ideal $\Delta' \subset \Delta$, the set $I_{\Delta'}=\{i\in I\ |\ \tau_{\Delta\Delta'}(r_i) \neq 1\}$ is finite. In such a case, for each finite co-ideal $\Delta' \subset \Delta$ we define
$$
C_{\Delta'}(\mathbf{r}) := \tau_{\Delta\Delta'}(r_{i_1}) \pcirc
\cdots \pcirc \tau_{\Delta\Delta'}(r_{i_m})  \in \U(R;\Delta'),
$$
where $I_{\Delta'} = \{i_1,\ldots,i_m\}$ and $i_1 \prec \cdots \prec
i_m$. It is clear that if $\Delta'' \subset \Delta' $ is another
finite co-ideal, we have $I_{\Delta''} \subset I_{\Delta'}$ and
$\tau_{\Delta'\Delta''}(C_{\Delta'}(\mathbf{r})) =
C_{\Delta''}(\mathbf{r})$, and so we define the {\em ordered
composition} of the family $\mathbf{r}$ as (see
(\ref{eq:U-inv-limit-finite}))
$$
\circ_{i\in I}\, r_i =
\lim_{\stackrel{\longleftarrow}{\substack{\scriptscriptstyle \Delta'
\subset \Delta\\ \scriptscriptstyle  \sharp \Delta'<\infty}}}
C_{\Delta'}(\mathbf{r}) \in \U(R;\Delta).
$$
\end{Def}

Let $p,q\geq 1$ be integers,
$\bfs=\{s_1,\dots,s_p\},\bft=\{t_1,\dots,t_q\}$ two sets of
variables and $\Delta\subset \N^p, \nabla\subset \N^q$ non-empty
co-ideals.

\begin{Def}  \label{def:substitution-maps}
An $A$-algebra map $\varphi:A\llbracket\bfs\rrbracket_\Delta
\xrightarrow{} A\llbracket\bft\rrbracket_\nabla$ will be called a
{\em substitution map} whenever $\varphi(s_i) \in \langle \bft
\rangle$ for all $i=1,\dots, p$. Such a map is continuous and
uniquely determined by the images $\varphi(s_i), i=1,\dots, p$. A
substitution map $\varphi:A\llbracket\bfs\rrbracket_\Delta
\xrightarrow{} A\llbracket\bft\rrbracket_\nabla$ will be called {\em
monomial} if $\varphi(s_i)$ is a monomial in $\bft$ for all
$i=1,\dots,p$.
\end{Def}

\begin{Def} \label{def:HS}
A {\em $\Delta$-variate Hasse--Schmidt derivation}, or a {\em
$\Delta$-va\-riate HS-de\-ri\-va\-tion} for short, of $A$ over
$k$
 is a family $D=(D_\alpha)_{\alpha\in \Delta}$
 of $k$-linear maps $D_\alpha:A
\longrightarrow A$, satisfying the following Leibniz type
identities: $$ D_0=\Id_A, \quad
D_\alpha(xy)=\sum_{\beta+\gamma=\alpha}D_\beta(x)D_\gamma(y) $$ for
all $x,y \in A$ and for all $\alpha\in \Delta$. We denote by
$\HS^p_k(A;\Delta)$ the set of all $\Delta$-variate
HS-derivations of $A$ over $k$. For $p=1$, a uni-variate
HS-derivation will be simply called a {\em Hasse--Schmidt
derivation}   (a HS-derivation for short), or a {\em higher
derivation}\footnote{This terminology is used for instance in
\cite[\S 27]{mat_86}.}, and we will simply write $\HS_k(A;m):=
\HS^1_k(A;\{0,\ldots,m\})$.\footnote{These HS-derivations are called
of length $m$ in \cite[\S 27]{mat_86}.}
\end{Def}

Any $\Delta$-variate HS-derivation $D$ of $A$ over $k$  can be
understood as a power series $ \sum_{\scriptscriptstyle
\alpha\in\Delta} D_\alpha \bfs^\alpha \in
R\llbracket\bfs\rrbracket_\Delta$, with $R=\End_k(A)$, and so we
consider $\HS^p_k(A;\Delta) \subset
R\llbracket\bfs\rrbracket_\Delta$. Actually, $\HS^p_k(A;\Delta)$ is
a (multiplicative) sub-group of $\U(R;\Delta)$. The group
operation in $\HS^p_k(A;\Delta)$ is explicitly given by $(D \pcirc
E)_\alpha = \sum_{\scriptscriptstyle \beta+\gamma=\alpha} D_\beta
\pcirc E_\gamma$, and the identity element of $\HS^p_k(A;\Delta)$ is
$\mathbb{I}$ with $\mathbb{I}_0 = \Id$ and $\mathbb{I}_\alpha = 0$
for all $\alpha \neq 0$. The inverse of a $D\in \HS^p_k(A;\Delta)$,
in the sense of the group structure on $\mathcal \U(A; \Delta)$,
will be denoted by $D^*$.
\medskip

For $\Delta' \subset \Delta \subset \N^p$ non-empty co-ideals, we
have truncations $\tau_{\Delta \Delta'}: \HS^p_k(A;\Delta)
\longrightarrow \HS^p_k(A;\Delta'), $ which are group homomorphisms.
\medskip

For each substitution map $\varphi:A\llbracket\bfs\rrbracket_\Delta
\to A\llbracket\bft\rrbracket_\nabla$ and each HS-derivation $D=
\sum_{\scriptscriptstyle \alpha\in\Delta} D_\alpha \bfs^\alpha \in
\HS^p_k(A;\Delta)$, we know that $\varphi \sbullet D =
\sum_{\scriptscriptstyle \alpha\in\Delta} \varphi(\bfs^\alpha)
D_\alpha $ is a $\nabla$-variate HS-derivation (see
\cite[Proposition 10]{Na3}).
\medskip

\begin{Def} (Cf. \cite{brown_1978,mat-intder-I,nar_2012})  \label{def:HS-integ}
Let $m\geq 1$ be an integer or $m=\infty$, and $\delta:A\to A$ a
$k$-derivation.
 We say that $\delta$ is {\em $m$-integrable} (over $k$)  if there is a
HS-derivation $D\in \HS_k(A;m)$ such that $D_1=\delta$. A such $D$
is called an $m$-integral of $\delta$. The set of $m$-integrable
$k$-derivations of $A$ is denoted by $\IDer_k(A;m)$. We say that
$\delta$ is {\em $f$-integrable} {\em (finite integrable)} if it is
$m$-integrable for all \underline{integers} $m\geq 1$. The set of
$f$-integrable $k$-derivations of $A$ is denoted by $\IDer^f_k(A)$.
\end{Def}

The sets $\IDer_k(A;m)$ and $\IDer^f_k(A)$ are $A$-submodules of
$\Der_k(A)$, and we have
$$ \Der_k(A) = \IDer_k(A;1) \supset \IDer_k(A;2) \supset \cdots \supset \IDer^f_k(A) \supset \IDer_k(A;\infty).
$$
If $\Q\subset k$ or $A$ is $0$-smooth over $k$, then any
$k$-derivation of $A$ is $\infty$-integrable, and so $\Der_k(A) =
\IDer^f_k(A) = \IDer_k(A;\infty)$ (see \cite[p. 230]{mat-intder-I}).
\medskip

The following Proposition is a straightforward consequence of Theorems 3.14 and 4.1 of \cite{Ti1} and will be used in section 3. 

\begin{prop}\label{Prop 3.14}
Let $k$ be a ring of characteristic $p >0$, $e, s \geq 1$  two integers and $D\in \HS_k(A; ep^s)$ a HS-derivation with $D_1=D_2=\ldots=D_{e-1}=0$. Then, $D_e\in \IDer_k(A;p^s)$.  
\end{prop}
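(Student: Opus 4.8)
The plan is to separate the statement into two parts: first that $D_e$ is a $k$-derivation at all, and then that it is $p^s$-integrable, the latter being realized by exhibiting $D_e$ as the $1$-component of a length-$p^s$ HS-derivation via a monomial substitution map. The derivation part is immediate: writing the Leibniz identity for the $e$-component, $D_e(xy) = \sum_{a+b=e} D_a(x)D_b(y)$, the hypothesis $D_1 = \cdots = D_{e-1}=0$ kills every summand except those with $a=0$ or $b=0$, so $D_e(xy) = x\,D_e(y) + D_e(x)\,y$ and $D_e\in\Der_k(A)$. Thus the whole content is the $p^s$-integrability.

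The mechanism I would use to pass from length $ep^s$ to length $p^s$ is the monomial substitution map $\sigma_e\colon A\llbracket u\rrbracket_{\{0,\dots,p^s\}} \to A\llbracket s\rrbracket_{\{0,\dots,ep^s\}}$ given by $u\mapsto s^e$. For any $E\in \HS_k(A;p^s)$, the action yields $\sigma_e\sbullet E = \sum_{j} s^{ej} E_j$, which by the cited action property is a HS-derivation of length $ep^s$; its components vanish off the multiples of $e$, and its $e$-component is exactly $E_1$. Moreover $E$ is recovered uniquely from $\sigma_e\sbullet E$ by reading off the coefficients of $s^{ej}$. Consequently, if I can produce a HS-derivation $\widetilde D\in\HS_k(A;ep^s)$ that lies in the image of $\sigma_e\sbullet(-)$ and still satisfies $\widetilde D_e = D_e$, then writing $\widetilde D = \sigma_e\sbullet E$ gives $E\in\HS_k(A;p^s)$ with $E_1 = \widetilde D_e = D_e$, which is precisely the assertion $D_e\in\IDer_k(A;p^s)$.

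The heart of the argument, and the step where Theorems 3.14 and 4.1 of \cite{Ti1} are indispensable, is producing such a $\widetilde D$: the given $D$ need not lie in the image of $\sigma_e\sbullet(-)$, since only $D_1,\dots,D_{e-1}$ are forced to vanish while the off-multiple components $D_a$ with $e\nmid a$ and $a>e$ may be nonzero. One must therefore clear these components degree by degree, composing $D$ with suitable HS-derivations supported in higher degree, without disturbing the $e$-component. I expect this clearing/descent to be the main obstacle, and it is exactly here that the characteristic-$p$ arithmetic and the prime-power length $p^s$ are essential, guaranteeing both the integrability of the correction terms and that the clearing terminates within length $ep^s$. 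Granting the two cited theorems, this reduces to a bookkeeping along the ordering of degrees in $\N$, after which the identification $E_1 = \widetilde D_e = D_e$ finishes the proof.
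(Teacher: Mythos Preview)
The paper does not actually prove this proposition: it is stated as a ``straightforward consequence of Theorems 3.14 and 4.1 of \cite{Ti1}'' with no further argument. Your proposal likewise defers the essential step---producing a modification $\widetilde D$ supported on multiples of $e$---to those same cited theorems, so at the level of what is \emph{proved} you and the paper are on equal footing: both are black-box citations to \cite{Ti1}.

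The extra scaffolding you supply (the substitution $u\mapsto s^e$ and the recovery of $E$ from $\sigma_e\sbullet E$) is correct but slightly more elaborate than needed: once a $\widetilde D\in\HS_k(A;ep^s)$ is supported on multiples of $e$, the sequence $(\widetilde D_{re})_{r=0}^{p^s}$ is immediately a uni-variate HS-derivation of length $p^s$ by the Leibniz rule, with first component $\widetilde D_e$. Your speculation that the cited results furnish a ``clearing'' procedure for the off-multiple components is one reasonable reading, but be aware that you have not actually carried this out, and the clearing is not automatic---for instance, naively setting a single off-multiple component to zero does not preserve the HS property in higher degrees. It is equally possible that Theorem 3.14 of \cite{Ti1} asserts the integrability of $D_e$ directly rather than via such a procedure. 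Since the present paper offers nothing beyond the citation, there is no genuine discrepancy to flag; your outline is coherent and at least as detailed as what it is being compared against.
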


\section{An ordered partition}

In this section, we define an ordered partition of $\N^q_+$ of arithmetic nature that will be crucial for the proof of our main results in Section 3.
\medskip

Let $q\geq 2$ be an integer. For $\beta_1,\ldots,\beta_q\in \Z$, we
denote by $\gcd(\beta_1,\ldots,\beta_q)$ the (unique) non-negative
integer $g$ such that $\Z\beta_1 + \cdots + \Z\beta_q = \Z g$.
Notice that $\gcd(\beta_1,\ldots,\beta_q)=0$ if and only if the
ideal  $(\beta_1,\ldots,\beta_q)$ is equal to $0$.

\begin{Def}   For $\alpha,\beta\in \N^q_+$, we define
$$ \alpha \sim \beta \quad \stackrel{\text{\rm def.}}{\Longleftrightarrow} \quad \exists r\in \Q^\times\ |\  \beta = r \alpha.
$$
\end{Def}
It is clear that $\sim$ is an  equivalence relation in $\N^q_+$.

\begin{Def}\label{Conjunto C}
We define $\mathcal C^q$ as the set $\{(\beta_1, \ldots, \beta_q)\in
\mathbb N^q_+ | \gcd(\beta_1,\ldots, \beta_q) = 1\}$.
\end{Def}

\begin{lem} \label{lem:parametrizacion-particion}
With the above notations, the map $ \beta \in \C^q \longmapsto
[\beta]  \in \N^q_+/\sim $ is bijective. Moreover, for each $\beta
\in \C^q$, the equivalence class $[\beta]$ coincide with the set
$\N_+ \beta =\{r \beta\ |\ r\in \N_+\}$.
\end{lem}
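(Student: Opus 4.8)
The plan is to check surjectivity and injectivity of the map $\beta \mapsto [\beta]$ (which is visibly well defined), deducing injectivity from the second assertion, which I would prove in tandem. The whole argument rests on the elementary scaling property of the gcd, namely that $\gcd(c\beta_1,\ldots,c\beta_q) = c\,\gcd(\beta_1,\ldots,\beta_q)$ for every integer $c \geq 1$; this is immediate from the identity $\Z(c\beta_1)+\cdots+\Z(c\beta_q) = c\,(\Z\beta_1+\cdots+\Z\beta_q)$ together with the definition of the gcd as the non-negative generator.

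For surjectivity, given $\alpha \in \N^q_+$ I would set $g=\gcd(\alpha_1,\ldots,\alpha_q)$, which satisfies $g\geq 1$ since $\alpha \neq 0$, and put $\beta = g^{-1}\alpha$. Each component $\beta_i=\alpha_i/g$ is a non-negative integer because $g$ divides $\alpha_i$, so $\beta\in\N^q_+$; and by the scaling property $\gcd(\beta_1,\ldots,\beta_q)=g^{-1}\gcd(\alpha_1,\ldots,\alpha_q)=1$, so $\beta\in\C^q$. Since $\alpha=g\beta$ with $g\in\Q^\times$, we have $[\alpha]=[\beta]$, so the map meets every class.

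For the second assertion I would fix $\beta\in\C^q$. The inclusion $\N_+\beta\subseteq[\beta]$ is clear, since $r\beta\in\N^q_+$ and $r\in\Q^\times$ for $r\in\N_+$. For the reverse inclusion, take $\alpha\in[\beta]$, so $\alpha=r\beta$ with $r\in\Q^\times$; since $\alpha$ and $\beta$ have non-negative entries that are not all zero, $r>0$, and I would write $r=a/b$ with $a,b\in\N_+$ and $\gcd(a,b)=1$. From $b\alpha=a\beta$ we get $b\mid a\beta_i$ for every $i$, and $\gcd(a,b)=1$ forces $b\mid\beta_i$ for all $i$ by Euclid's lemma; hence $b\mid\gcd(\beta_1,\ldots,\beta_q)=1$, so $b=1$ and $r=a\in\N_+$, giving $\alpha\in\N_+\beta$.

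Finally, injectivity follows at once: if $\beta,\beta'\in\C^q$ satisfy $[\beta]=[\beta']$, then $\beta'\in[\beta]=\N_+\beta$, so $\beta'=r\beta$ for some $r\in\N_+$, and the scaling property gives $1=\gcd(\beta'_1,\ldots,\beta'_q)=r\,\gcd(\beta_1,\ldots,\beta_q)=r$, whence $r=1$ and $\beta=\beta'$. The only genuinely arithmetic point is the coprimality step in the second assertion, where Euclid's lemma combines with the defining property $\gcd=1$ of elements of $\C^q$; everything else is bookkeeping.
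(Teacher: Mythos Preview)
Your proof is correct and complete. The paper itself states this lemma without proof, treating it as an elementary observation; your argument supplies exactly the kind of standard verification one would expect, and there is nothing to compare against.
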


\begin{Def}\label{Aplicacion g}
We define the map $g^q:\N^q \to \C^2\cup \{(0,0)\}$ (or simply $g$
if there is no confusion) as:
$$
\begin{array}{rcclcc}
\beta& \mapsto & \left\{\begin{array}{ll} (0,0)& \mbox{ if }
\beta_1=\beta_2=0\\[0.2cm]
\dfrac{1}{\gcd(\beta_1,\beta_2)}(\beta_1,\beta_2)& \mbox{ otherwise.
}
 \end{array} \right.
\end{array}
$$
\end{Def}

Observe that, if $\beta\in\C^q$, then
$(\gcd(\beta_1,\beta_2),\beta_3,\ldots,\beta_q)\in \C^{q-1}$ and if
$\beta'\in[\beta]$, then $g(\beta')=g(\beta)$.
\medskip

We are going to define a total ordering $\preceq^q$ on $\C^q$, and
so on the partition $\N^q_+/\sim$ through the bijection from Lemma
\ref{lem:parametrizacion-particion}.
\medskip

Let us consider $\beta,\gamma\in \C^q$. If $q=2$, then
$\beta\prec^2 \gamma $ if and only if $\gamma_2 \beta_1 <
\gamma_1 \beta_2 $. For $q\geq 3$, we say that
$\beta\prec^q\gamma$ if some of the following conditions hold:
 \begin{itemize}
 \item[1.] $g(\beta)=(0,0)$ and $g(\gamma)\neq (0,0)$.
 \item[2.] $g(\beta),g(\gamma)\neq (0,0)$ and $g(\beta)\prec^2
 g(\gamma)$.
 \item[3.] $g(\beta)=g(\gamma)$ and $(\gcd (\beta_1,\beta_2),\beta_3,\ldots,
 \beta_q)\prec^{q-1} (\gcd (\gamma_1,\gamma_2),\gamma_3,\ldots, \gamma_q)$.
 \end{itemize}
 As usual, we say that $\beta\preceq^q\gamma$ if and only if $\beta\prec^q\gamma$ or $\beta=\gamma$.
\medskip

The proof of the following proposition can be easily proved by
induction on $q$ and it is left to the reader.

\begin{prop} The relation $\preceq^q$ above is a
total ordering on $\C^q$. Moreover,
$$\textstyle (0,\dots,0,1) = \min_{\preceq^q} (\C^q)\quad \text{and}\quad (1,0,\dots,0) = \max_{\preceq^q} (\C^q).
$$
\end{prop}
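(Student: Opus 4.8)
The plan is to prove by induction on $q \geq 2$ that $\preceq^q$ is a total ordering on $\C^q$, and that the two named extremal elements are respectively the minimum and the maximum. The proof that $\preceq^q$ is a total order amounts to checking three properties: that it is irreflexive/antisymmetric (exactly one of $\beta \prec^q \gamma$, $\beta = \gamma$, $\gamma \prec^q \beta$ holds), that it is transitive, and that any two elements are comparable. I would organize everything around the recursive definition, treating the base case $q=2$ first and then reducing the case $q \geq 3$ to $\C^2$ (via $g$) and to $\C^{q-1}$ (via the reduction $\beta \mapsto (\gcd(\beta_1,\beta_2),\beta_3,\dots,\beta_q)$), invoking the inductive hypothesis on each.

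For the base case $q=2$, note that for $\beta,\gamma \in \C^2$ the relation $\beta \prec^2 \gamma \Longleftrightarrow \gamma_2\beta_1 < \gamma_1\beta_2$ is governed by the quantity $\beta_1/\beta_2 \in [0,\infty]$ (the \emph{slope}), where elements of $\C^2$ are exactly the pairs $(\beta_1,\beta_2)$ with $\gcd=1$; since each reduced pair determines its slope uniquely and distinct reduced pairs have distinct slopes, $\prec^2$ is simply the pullback of the strict order $<$ on these slope values, hence a total order. The minimum slope $0$ is realized only by $(0,1)$ and the maximum value $\infty$ only by $(1,0)$, giving the extremal elements in the $q=2$ case. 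First I would make this slope picture precise, because it makes trichotomy, transitivity, and comparability all immediate from the corresponding properties of $<$ on $\Q \cup \{\infty\}$.

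For the inductive step $q \geq 3$, I would argue that $\prec^q$ is the lexicographic combination of two already-established total orders. Given $\beta,\gamma \in \C^q$, observe first that $g(\beta) = (0,0)$ precisely when $\beta_1 = \beta_2 = 0$, and that the remark following Definition~\ref{Aplicacion g} guarantees $(\gcd(\beta_1,\beta_2),\beta_3,\dots,\beta_q) \in \C^{q-1}$, so the reduction lands in the correct set for the inductive hypothesis. Conditions~1--3 in the definition of $\prec^q$ compare $\beta$ and $\gamma$ \emph{first} by whether $g$ vanishes and, when both are nonzero, by $g(\beta) \prec^2 g(\gamma)$ (using the $q=2$ case), and \emph{only when} $g(\beta) = g(\gamma)$ fall back on $\prec^{q-1}$ applied to the reductions (using the inductive hypothesis). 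The key point making this a clean lexicographic order is that $g(\beta) = g(\gamma)$ together with equality of the reductions forces $\beta = \gamma$: indeed, if $g(\beta)=g(\gamma)=(0,0)$ then $\beta_1=\beta_2=\gamma_1=\gamma_2=0$ and the reductions carry the remaining coordinates, while if $g(\beta)=g(\gamma)=(u,v)\neq(0,0)$ then $\beta_1=u\cdot\gcd(\beta_1,\beta_2)$, $\beta_2 = v\cdot\gcd(\beta_1,\beta_2)$ with $\gcd(\beta_1,\beta_2)$ recovered as the first coordinate of the reduction. From this, trichotomy, transitivity, and comparability of $\prec^q$ follow formally from the standard fact that a lexicographic order built from two total orders is again total.

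The main obstacle, and the step deserving the most care, is verifying that the first key of the lexicographic comparison is genuinely a total order on the set of possible values of $g$ together with its fallback interaction. Concretely, I must confirm that $g$ takes values in $\C^2 \cup \{(0,0)\}$, that condition~1 correctly orders the single class $\{g=(0,0)\}$ below all classes with $g \neq (0,0)$, and that $\prec^2$ restricted to $\C^2$ (the image of $g$ on vectors with $(\beta_1,\beta_2)\neq 0$) is already known to be total from the base case — so that the induction is genuinely well-founded and the three defining clauses are mutually exclusive and exhaustive. For the extremal elements, I would verify directly that $(0,\dots,0,1)$ has $g = (0,0)$ and minimal reduction $(0,\dots,0,1) \in \C^{q-1}$ (itself the inductive minimum), forcing it below everything via clause~1 or clause~3; symmetrically $(1,0,\dots,0)$ has $g=(1,0) = \max_{\preceq^2}(\C^2)$, placing it above everything via clause~2, which completes the induction.
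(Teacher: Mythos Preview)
Your approach is correct and matches the paper's: the paper simply states that the result ``can be easily proved by induction on $q$ and it is left to the reader,'' and your lexicographic framing of the recursive definition is exactly the natural way to carry out that induction. One small remark: in the last line, clause~2 alone does not place $(1,0,\dots,0)$ above \emph{every} other $\beta$ --- you also need clause~1 when $g(\beta)=(0,0)$ and clause~3 (plus the inductive maximum of $\C^{q-1}$) when $g(\beta)=(1,0)$ --- but this is already implicit in your lexicographic description and is straightforward to fill in.
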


We will also denote by $\preceq^q$ the total ordering induced on
$\N^q_+/\sim$ by the bijection from Lemma
\ref{lem:parametrizacion-particion}.
\medskip

The following proposition deals with the behavior of the total
ordering $\preceq^q$ with respect to the monoide structure on
$\N^q$. It will be the main tool in proving the results in Section 3.

\begin{prop}\label{Desigualdad 1}
Let $\lambda,\sigma,\beta\in \N^q_+$ such that
$\lambda + \sigma = \beta$. Then, one and only one of the following properties holds:\\
(a) $[\lambda]=[\beta]=[\sigma]$,\\
(b) $[\sigma] \prec^q [\beta] \prec^q [\lambda]$,\\
(c) $[\lambda] \prec^q [\beta] \prec^q [\sigma]$.
\end{prop}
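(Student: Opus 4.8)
The plan is to argue by induction on $q$, reducing the hierarchical ordering $\prec^q$ to the two-dimensional comparison encoded by $g$ together with the recursive reduction $\rho(\alpha):=(\gcd(\alpha_1,\alpha_2),\alpha_3,\ldots,\alpha_q)\in\N^{q-1}$ appearing in clause 3. Since $\preceq^q$ is a total ordering, the three alternatives (a), (b), (c) describe pairwise incompatible orderings of $[\lambda],[\sigma],[\beta]$, so it is enough to prove that at least one of them always holds.

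For the base case $q=2$ I would introduce the determinant $D(\alpha,\gamma):=\gamma_1\alpha_2-\gamma_2\alpha_1$, so that for $\alpha,\gamma\in\N^2_+$ one has $[\alpha]\prec^2[\gamma]\iff D(\alpha,\gamma)>0$ and $[\alpha]=[\gamma]\iff D(\alpha,\gamma)=0$. Substituting $\beta=\lambda+\sigma$ yields the key identity $D(\lambda,\beta)=D(\beta,\sigma)=D(\lambda,\sigma)$, and the common sign of these three integers selects (a) (sign $0$), (c) (sign $>0$) or (b) (sign $<0$). This is the whole of the base case.

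For the inductive step ($q\geq 3$) I would split according to the values $a=g(\lambda),\,b=g(\sigma),\,c=g(\beta)$, using $(\beta_1,\beta_2)=(\lambda_1,\lambda_2)+(\sigma_1,\sigma_2)$. When $(\lambda_1,\lambda_2)$ and $(\sigma_1,\sigma_2)$ are both nonzero and $a\neq b$, the triple of first-pairs lies in $\N^2_+$ and is governed by the base case; clause 2 then transports the strict $\prec^2$-chain among $a,c,b$ verbatim to a strict chain among $[\lambda],[\beta],[\sigma]$, giving (b) or (c) at once. The delicate cases are $a=b$ (which forces $c=a=b$) and the mixed case where exactly one of $(\lambda_1,\lambda_2),(\sigma_1,\sigma_2)$ vanishes, since there clause 3 pushes the comparison down to $\rho$, and $\rho$ fails to be additive in general. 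The crucial observation, which I expect to be the main obstacle, is that $\rho(\beta)=\rho(\lambda)+\rho(\sigma)$ holds \emph{exactly} in these situations: when $a=b$ the vectors $(\lambda_1,\lambda_2),(\sigma_1,\sigma_2)$ are proportional to a common primitive vector, whence $\gcd(\beta_1,\beta_2)=\gcd(\lambda_1,\lambda_2)+\gcd(\sigma_1,\sigma_2)$, while the degenerate subcase is immediate. This is what restores additivity precisely where the induction hypothesis is applied.

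With $\rho(\beta)=\rho(\lambda)+\rho(\sigma)$ available, I would invoke the induction hypothesis on the $\rho$-triple and lift the conclusion through clause 3. If $a=b=c$ the three $g$-values coincide, so clause 3 makes $\prec^q$ on $\{\lambda,\sigma,\beta\}$ equivalent to $\prec^{q-1}$ on $\{\rho(\lambda),\rho(\sigma),\rho(\beta)\}$ and the trichotomy transfers directly. In the mixed case, say $(\sigma_1,\sigma_2)=(0,0)$, clause 1 already gives $[\sigma]\prec^q[\lambda]$ and $[\sigma]\prec^q[\beta]$, leaving only the order of $[\beta]$ and $[\lambda]$; here I would prove an auxiliary monotonicity lemma, namely that $\delta_1=0<\gamma_1$ implies $[\delta]\prec^m[\gamma]$ (immediate from clauses 1–2, since $g^m(\delta)\in\{(0,0),(0,1)\}$ and $(0,1)=\min_{\preceq}(\C^2)$), and apply it to $\rho(\sigma)$ (first coordinate $0$) and $\rho(\lambda)$ (first coordinate $\gcd(\lambda_1,\lambda_2)\geq 1$). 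This yields $[\rho(\sigma)]\prec^{q-1}[\rho(\lambda)]$, which excludes both the equality branch and the reversed branch of the induction hypothesis, forcing $[\rho(\beta)]\prec^{q-1}[\rho(\lambda)]$ and hence $[\beta]\prec^q[\lambda]$, i.e.\ alternative (b); the opposite vanishing gives (c) symmetrically. Everything beyond the additivity identity and this auxiliary lemma is bookkeeping dictated by the three clauses defining $\prec^q$.
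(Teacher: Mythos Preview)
Your argument is correct and follows essentially the same inductive scheme as the paper's proof: the base case via the cross-product (your determinant identity $D(\lambda,\beta)=D(\beta,\sigma)=D(\lambda,\sigma)$ is a cleaner packaging of the paper's inequality manipulation), and the inductive step via the additivity $\rho(\beta)=\rho(\lambda)+\rho(\sigma)$ when the $g$-values permit, together with clause~1 in the degenerate case. The only organizational difference is that you split cases on the pair $(g(\lambda),g(\sigma))$ whereas the paper splits on which clause of the definition witnesses $[\lambda]\prec^q[\beta]$ (or its reverse); both cover the same ground.
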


\begin{proof} It is clear that if any two $q$-tuples among
$\sigma$, $\beta$, $\lambda$ have the same class, then all three
classes $[\sigma]$, $[\beta]$, $[\lambda]$ are equal. So, let us
assume that they are all different, in particular $[\lambda]\neq
[\beta]$. Hence, we have either $[\lambda]\prec^q [\beta]$ or
$[\beta]\prec^q[\lambda]$. We will prove the result by induction on
$q\geq 2$.

If $q=2$ and $[\lambda]\prec^2[\beta]$, then $\beta_2\lambda_1 <
\beta_1\lambda_2$ and, since $\lambda+\sigma=\beta$, we obtain that
$\beta_2(\beta_1-\sigma_1)<\beta_1(\beta_2-\sigma_2)$. Hence,
$\beta_1\sigma_2 <\beta_2\sigma_1$ and, by definition,
$[\beta]\prec^2[\sigma]$. If $[\beta]\prec^2[\lambda]$, for similar
reasons as before, we deduce that $[\sigma]\prec^2[\beta]$ and
the proposition is proved for $q=2$. Let us assume that the result
is true for $q-1$ and we will prove it for $q \geq 3$. We will start
assuming that $[\lambda] \prec^q [\beta]$. Three different cases
have to be considered according to the definition of $\prec^q$:
\medskip

\noindent {\bf Case 1:} 
{ $g(\lambda)=(0,0)$ and $g(\beta)\neq (0,0)$.}
In this case, $\lambda_i=0$ for $i=1,2$ and so
$(\sigma_1,\sigma_2)=(\beta_1,\beta_2)$, which implies that
$g(\sigma)=g(\beta)$ and
$\gcd(\beta_1,\beta_2)=\gcd(\sigma_1,\sigma_2)=d\neq0$. From this,
and from the equality $\lambda+\sigma=\beta$, it follows that
$$
(0,\lambda_3,\ldots,\lambda_q)+(d,\sigma_3,\ldots,\sigma_q)=
(d,\beta_3,\ldots,\beta_q).
$$
Moreover, we have $g((0,\lambda_3,\ldots,\lambda_q))=(0,a)$ where
$a=0$ if $\lambda_3=0$ and $a=1$ otherwise. So, since the first
component of $g(d,\beta_3,\ldots,\beta_q)$ is not zero and
$(0,1)=\min_{\preceq^2} \C^2$, we deduce that
$[(0,\lambda_3,\ldots,\lambda_q)]\prec^{q-1}
[(d,\beta_3,\ldots,\beta_q)]$. By induction hypothesis, we have
$[(d,\beta_3,\ldots,\beta_q)] \prec^{q-1}
[(d,\sigma_3,\ldots,\sigma_q)]$ and we conclude that $[\beta]
\prec^q [\sigma]$.
\smallskip

\noindent {\bf Case 2:} { $g(\lambda),g(\beta)\neq (0,0)$ and $g(\lambda)\prec^2
g(\beta)$.} It is clear that $g(\gamma)=g((\gamma_1,\gamma_2))$ for
all $\gamma \in \mathbb N^q$ and, if $g(\gamma)\neq (0,0)$, then
$[g(\gamma)]=[(\gamma_1,\gamma_2)]$ for all $\gamma\in \mathbb
N^q_+$ because
$(\gamma_1,\gamma_2)=\gcd(\gamma_1,\gamma_2)g(\gamma)$. Therefore,
we have $[(\lambda_1,\lambda_2)]\prec^2[(\beta_1,\beta_2)]$, and
since $\lambda+\sigma=\beta$, we deduce that
$(\sigma_1,\sigma_2)\neq (0,0)$. Now, we apply induction
hypothesis to the equality
$(\lambda_1,\lambda_2)+(\sigma_1,\sigma_2)=(\beta_1,\beta_2)$ and we
get $[(\beta_1,\beta_2)]\prec^2[(\sigma_1,\sigma_2)]$, which implies
that $g(\beta)\prec^2 g(\sigma)$. So, by definition,
$[\beta]\prec^q[\sigma]$.
\smallskip

\noindent {\bf Case 3:} { $g(\lambda)=g(\beta)$ and
$[(\gcd(\lambda_1,\lambda_2),\lambda_3,\ldots, \lambda_q)]
\prec^{q-1} [(\gcd(\beta_1,\beta_2),\beta_3,\ldots, \beta_q)]$.} If
$g(\lambda)=g(\beta)=(0,0)$, then $g(\sigma)=(0,0)$ because
$\lambda_i=\beta_i=0$ for $i=1,2$ and $\lambda+\sigma=\beta$. If
$g(\lambda)=g(\beta)\neq (0,0)$, then
$[(\lambda_1,\lambda_2)]=[g(\lambda)]=[g(\beta)]=[(\beta_1,\beta_2)]$.
Let us notice that $(\sigma_1,\sigma_2)\neq (0,0)$ otherwise,
$g(\sigma)=(0,0)$ and $g(\beta)\neq (0,0)$ and from Case 1, we get
that $[\beta]\prec^q [\lambda]$, which is a contradiction. Now,
induction hypothesis can be applied to
$(\lambda_1,\lambda_2)+(\sigma_1,\sigma_2)=(\beta_1,\beta_2)$ and we
obtain $[(\sigma_1,\sigma_2)]=[(\beta_1,\beta_2)]$. So, in any case,
$g(\lambda)=g(\beta)=g(\sigma)=\tau$. Since
$(\gamma_1,\gamma_2)=\gcd(\gamma_1,\gamma_2)g(\gamma)$ for all
$\gamma\in \N^q$, we have that
$$
\gcd(\lambda_1,\lambda_2)g(\lambda)+\gcd(\sigma_1,\sigma_2)g(\sigma)=
\gcd(\beta_1,\beta_2)g(\beta).
$$
If $\tau=(0,0)$, then
$\gcd(\lambda_1,\lambda_2)=\gcd(\sigma_1,\sigma_2)=
\gcd(\beta_1,\beta_2)=0$, otherwise
$\gcd(\lambda_1,\lambda_2)+\gcd(\sigma_1,\sigma_2)=
\gcd(\beta_1,\beta_2)$. So, in both cases,
$$
(\gcd(\lambda_1,\lambda_2),\lambda_3,\ldots,\lambda_q)+
(\gcd(\sigma_1,\sigma_2),\sigma_3,\ldots,\sigma_q)=
(\gcd(\beta_1,\beta_2),\beta_3,\ldots, \beta_q).
$$
From $[(\gcd(\lambda_1,\lambda_2),\lambda_3,\ldots, \lambda_q)]\prec^q
[(\gcd(\beta_1,\beta_2),\beta_3,\ldots,\beta_q)]$ and the induction
hypothesis, we get that 
$$[(\gcd(\beta_1,\beta_2),\beta_3,\ldots,
\beta_q)]\prec^{q-1}
[(\gcd(\sigma_1,\sigma_2),\sigma_3,\ldots,\sigma_q)]$$ 
and, by
definition, $[\beta]\prec^q [\sigma]$.
\medskip

In conclusion, we have proven that $[\lambda] \prec^q[\beta]$ implies
$[\beta] \prec^q [\sigma]$. Now, let us assume that $[\beta]\prec^q
[\lambda]$. If $g(\beta)=(0,0)$ and $g(\lambda)\neq (0,0)$, we have
$\beta_i=0$ for $i=1,2$ and since $\sigma_i+\lambda_i=\beta_i$, we
deduce that $\sigma_i=\lambda_i=0$ for $i=1,2$, but
$(\lambda_1,\lambda_2)\neq (0,0)$, so we have a contradiction. The
cases when $g(\beta),g(\lambda)\neq (0,0)$ with $g(\beta)\prec^2
g(\lambda)$ and when $g(\beta)=g(\lambda)$ with
$[(\gcd(\beta_1,\beta_2),\beta_3,\ldots,\beta_q)]\prec^{q-1}
[(\gcd(\lambda_1,\lambda_2),\lambda_3,\ldots,\lambda_q)]$ are
similar to the previous Cases 2 and 3 respectively. Hence, we have
the result.
\end{proof}

\begin{lem}\label{La suma de vectores} Let $\lambda_1,\ldots, \lambda_s\in \N^q_+$ such that
$[\lambda_1] \prec^q [\lambda_2] \prec^q \cdots \prec^q
[\lambda_s]$. Then, $[\lambda_1]\prec^q
[\lambda_1+\cdots+\lambda_s]$.
\end{lem}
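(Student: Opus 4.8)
The plan is to argue by induction on $s$, with Proposition \ref{Desigualdad 1} supplying all of the real content; the hypothesis that the classes are strictly increasing enters only to discard one of the two non-trivial alternatives offered by that proposition. Note that the statement is non-vacuous exactly for $s\geq 2$ (for $s=1$ there are no relations to assume and the conclusion would read $[\lambda_1]\prec^q[\lambda_1]$), so the induction should start at $s=2$.

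For the base case $s=2$, set $\beta=\lambda_1+\lambda_2$ and apply Proposition \ref{Desigualdad 1} to the decomposition $\lambda_1+\lambda_2=\beta$ (taking $\lambda=\lambda_1$ and $\sigma=\lambda_2$). Since $[\lambda_1]\prec^q[\lambda_2]$, the three classes are not all equal, so we are in case (b) or case (c). Case (b) would read $[\lambda_2]\prec^q[\beta]\prec^q[\lambda_1]$, forcing $[\lambda_2]\prec^q[\lambda_1]$, which contradicts $[\lambda_1]\prec^q[\lambda_2]$ by transitivity of the total ordering $\preceq^q$. Hence case (c) holds, and in particular $[\lambda_1]\prec^q[\beta]$, as desired.

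For the inductive step I would assume the statement for $s-1$ vectors and put $\mu=\lambda_2+\cdots+\lambda_s\in\N^q_+$. Applying the induction hypothesis to $\lambda_2,\dots,\lambda_s$, whose classes satisfy $[\lambda_2]\prec^q\cdots\prec^q[\lambda_s]$, yields $[\lambda_2]\prec^q[\mu]$, and therefore $[\lambda_1]\prec^q[\lambda_2]\prec^q[\mu]$. Now write $\beta=\lambda_1+\cdots+\lambda_s=\lambda_1+\mu$ and apply Proposition \ref{Desigualdad 1} to $\lambda_1+\mu=\beta$. As in the base case, $[\lambda_1]\neq[\mu]$ rules out case (a), while case (b) would give $[\mu]\prec^q[\lambda_1]$, contradicting $[\lambda_1]\prec^q[\mu]$. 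So case (c) applies, giving $[\lambda_1]\prec^q[\beta]$ and completing the induction.

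The only point that requires care — and which I would regard as a bookkeeping matter rather than a genuine obstacle — is correctly identifying which of the two orderings (b)/(c) in Proposition \ref{Desigualdad 1} is admissible: the essential content is simply that addition places $[\beta]$ strictly between the classes of its two summands, and the strict-monotonicity hypothesis pins down on which side $[\lambda_1]$ must fall. In particular, no further case analysis on $q$ or on the map $g$ is needed here, since all of that combinatorics has already been absorbed into the proof of Proposition \ref{Desigualdad 1}.
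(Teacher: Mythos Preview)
Your proof is correct and follows essentially the same approach as the paper's: induction on $s\geq 2$, grouping the sum as $\lambda_1+(\lambda_2+\cdots+\lambda_s)$, applying the induction hypothesis to the tail to get $[\lambda_2]\prec^q[\lambda_2+\cdots+\lambda_s]$, and then invoking Proposition~\ref{Desigualdad 1}. The only difference is presentational---you spell out the elimination of cases (a) and (b), whereas the paper states the resulting inequality directly.
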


\begin{proof}
We will prove the lemma by induction on $s\geq 2$. By Proposition
\ref{Desigualdad 1}, since $[\lambda_1]\prec^q [\lambda_2]$, we get
$[\lambda_1] \prec^q [\lambda_1+\lambda_2]\prec^q [\lambda_2]$. Let
us assume that the result is true for $i<s$,  we will prove it for
$s>2$. By induction hypothesis, $[\lambda_2]\prec^q
[\lambda_2+\cdots+ \lambda_s]$. Since $[\lambda_1]\prec^q
[\lambda_2]$, by Proposition \ref{Desigualdad 1}, $[\lambda_1]
\prec^q [\lambda_1+\lambda_2+\cdots +\lambda_s]$ and we have the
result.
\end{proof}

\section{Main results}

From now on, $\Delta\subseteq \N^q$ will be a non-zero and non-empty
co-ideal and we will simply use $\prec$ and  $\preceq$ instead of $\prec^q$ and $\preceq^q$ (the above
total ordering on $\C^q$ or $\N^q_+/\sim$) if no confusion arises.

\medskip

We denote $ \C^q_\Delta=\C^q \cap \Delta$, and for each $\beta\in
\C^q_\Delta$, we define $P^\Delta_\beta := [\beta] \cap
\Delta=\{n\beta\in \N^q_+ \ | \ n\in \mathbb N_+, n\beta\in
\Delta\}$, $M_\beta^\Delta := \{n\in \N_+ \ |\ n\beta \in\Delta\} $,
and $m_\beta^\Delta=\#(M_\beta^\Delta)=\#(P^\Delta_\beta)$. Let us
notice that $m_\beta^\Delta=\max M_\beta^\Delta$ if $M_\beta^\Delta$
is finite and $m_\beta^\Delta=\infty$ otherwise. The
$P^\Delta_\beta$'s, $\beta \in \C^q_\Delta$, form the partition of
$\Delta\setminus \{0\}$ induced by $\sim$. For each $\beta \in
\C^q_\Delta$, we also introduce

\begin{eqnarray*}
&\displaystyle \T_\beta^\Delta:=\bigsqcup_{\beta \preceq \lambda}
P_\lambda^\Delta=\bigsqcup_{\beta \preceq \lambda} \{n\lambda\in
\N^q_+ \ |\  n\in \mathbb N_+, n\lambda \in \Delta\},&
\\
&\displaystyle \s_\beta^\Delta:= \Delta \setminus ( \T_\beta^\Delta
\cup \{0\} )=\bigsqcup_{\lambda \prec \beta }
P_\lambda^\Delta=\bigsqcup_{\lambda \prec \beta} \{n\lambda\in
\N^q_+ \ |\ n\in \mathbb N_+, n\lambda \in \Delta\},&
\\
\end{eqnarray*}
and the monomial substitution map
$$
\begin{array}{rccc}
\psi_{\beta,\Delta}:& A\llbracket\mu\rrbracket_{m_\beta^\Delta}&\to
&
A\llbracket s_1,\ldots,s_q\rrbracket_\Delta\\
&\mu&\mapsto & s_1^{\beta_1} \ldots s_q^{\beta_q}.
\end{array}
$$
where, for $m\in \N_+$, we define
$A\llbracket\mu\rrbracket_m=A\llbracket\mu\rrbracket_{\{ n\in \N \ |
\ n \leq m\}}$.
 \smallskip

It is clear that for any $D\in \HS^q_k(A;\Delta)$ with
$\supp(D)\subset \{0\} \cup P^\Delta_\beta$, the sequence $E:=(E_r:=
D_{r \beta})_{r\in M_\beta^\Delta \cup \{0\}}$ is a (uni-variate)
HS-derivation of length $m_\beta^\Delta$, and $D=
\psi_{\beta,\Delta} \sbullet E$. The following proposition generalizes this result and will be the main step in proving Theorem \ref{Descomposicion}.

\begin{prop}\label{Primer paso descomposicion}
 Let $\beta \in
\C^q_\Delta$, $m=m_\beta^\Delta$ and $D\in \HS^q_k(A;\Delta)$ such
that $\supp(D) \subset \T_\beta^\Delta \cup \{0\}$ (or equivalently,
$D_\gamma=0$ for all $\gamma\in \s_\beta^\Delta$). Then, there are
unique $E\in \HS_k(A;m)$ and $D'\in \HS^q_k(A;\Delta)$ such that
$\supp(D') \subset \T_\beta^\Delta \setminus P_\beta^\Delta \cup
\{0\}$ (or equivalently, $D'_\gamma = 0$ for all $\gamma\in
\s_\beta^\Delta \sqcup P^\Delta_\beta$) and $D= (\psi_{\beta,\Delta}
\sbullet E) \pcirc D'$. Moreover, if $D_\gamma=0$ for all $\gamma
\in P^\Delta_\beta$ with $\gamma \leq \alpha$ for some $\alpha \in
\Delta$, then  $D'_\gamma=D_\gamma$ for all $\gamma\in \Delta$ with
$\gamma \leq \alpha$.
\end{prop}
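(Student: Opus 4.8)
The plan is to produce the pair $(E,D')$ explicitly and then read off both its properties and its uniqueness from the construction.

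First I would define the candidate uni-variate HS-derivation by setting $E_0=\Id$ and $E_n:=D_{n\beta}$ for $n\in M_\beta^\Delta$. To check that $E=(E_n)_n\in\HS_k(A;m)$ I would expand $E_n(xy)=D_{n\beta}(xy)$ via the Leibniz identity for $D$ and split the sum $\sum_{\mu+\nu=n\beta}D_\mu(x)D_\nu(y)$ according to the trichotomy of Proposition \ref{Desigualdad 1} applied to $\mu+\nu=n\beta$, recalling $[n\beta]=[\beta]$. The two ``mixed'' cases (b) and (c) each force one of $\mu,\nu$ to lie in $\s_\beta^\Delta$, where $D$ vanishes by hypothesis; the remaining case (a) forces $\mu=a\beta$, $\nu=b\beta$ with $a+b=n$ (using Lemma \ref{lem:parametrizacion-particion}), so that the surviving terms reassemble exactly into $\sum_{a+b=n}E_a(x)E_b(y)$. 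This yields $E\in\HS_k(A;m)$.

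Next I would set $F:=\psi_{\beta,\Delta}\sbullet E$ and \emph{define} $D':=F^*\pcirc D$, so that $D=F\pcirc D'$ and $D'\in\HS^q_k(A;\Delta)$ automatically. The crucial structural remark is that, because $\psi_{\beta,\Delta}$ is the monomial substitution $\mu\mapsto\bfs^\beta$, one checks directly that $\psi_{\beta,\Delta}\sbullet(-)$ is a group homomorphism onto the subgroup of $\HS^q_k(A;\Delta)$ supported on $\{0\}\cup P_\beta^\Delta$; concretely $F_{r\beta}=E_r$ and $F_\gamma=0$ otherwise, and likewise $F^*=\psi_{\beta,\Delta}\sbullet E^*$ with $F^*_{r\beta}=(E^*)_r$. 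With this I would verify $\supp(D')\subset(\T_\beta^\Delta\setminus P_\beta^\Delta)\cup\{0\}$ in two steps. For $\gamma\in\s_\beta^\Delta$, in $D'_\gamma=\sum_{r\ge 0}F^*_{r\beta}D_{\gamma-r\beta}$ the term $r=0$ vanishes since $D_\gamma=0$, and for $r\ge 1$ I apply Proposition \ref{Desigualdad 1} to $r\beta+(\gamma-r\beta)=\gamma$: the classes $[r\beta]=[\beta]$ and $[\gamma]\prec[\beta]$ rule out cases (a) and (c), leaving case (b), which forces $[\gamma-r\beta]\prec[\beta]$, i.e.\ $\gamma-r\beta\in\s_\beta^\Delta$ and $D_{\gamma-r\beta}=0$. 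For $\gamma=n\beta\in P_\beta^\Delta$ only multiples of $\beta$ contribute, and $D'_{n\beta}=\sum_{r=0}^n(E^*)_rE_{n-r}=(E^*\pcirc E)_n=\II_n=0$ since $n\ge 1$. Thus $D'$ has the required support.

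For uniqueness I would observe that if $D=(\psi_{\beta,\Delta}\sbullet\widetilde E)\pcirc\widetilde D'$ with $\widetilde D'_\gamma=0$ for all $\gamma\in P_\beta^\Delta$, then reading the $n\beta$-component kills every $\widetilde D'$-contribution except $\widetilde D'_0=\Id$, giving $\widetilde E_n=D_{n\beta}=E_n$; hence $\widetilde E=E$, $F$ is determined, and $\widetilde D'=F^*\pcirc D=D'$. Finally, for the ``moreover'' part, the hypothesis $D_\gamma=0$ for $\gamma\in P_\beta^\Delta$ with $\gamma\le\alpha$ says $E_1=\cdots=E_N=0$, where $N=\max\{n:n\beta\le\alpha\}$; an elementary induction on $E\pcirc E^*=\II$ then gives $(E^*)_1=\cdots=(E^*)_N=0$, so $F^*_{r\beta}=0$ for $1\le r\le N$. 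Consequently, for any $\gamma\le\alpha$ every term with $r\ge 1$ in $D'_\gamma=\sum_{r\ge0}F^*_{r\beta}D_{\gamma-r\beta}$ has $r\beta\le\gamma\le\alpha$, hence $r\le N$ and $F^*_{r\beta}=0$, leaving $D'_\gamma=D_\gamma$. The main obstacle is the support analysis on $\s_\beta^\Delta$: its entire content is invoking the trichotomy of Proposition \ref{Desigualdad 1} correctly so as to discard two of the three cases, and this is where the arithmetic of the ordering $\preceq^q$ does the real work.
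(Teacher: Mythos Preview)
Your proof is correct and follows essentially the same construction as the paper: define $E_n=D_{n\beta}$, verify the Leibniz rule via the trichotomy of Proposition~\ref{Desigualdad 1}, set $D'=(\psi_{\beta,\Delta}\sbullet E)^{*}\pcirc D$, and check the support and the ``moreover'' clause componentwise. The one notable difference is your uniqueness argument: you read off $\widetilde E_n=D_{n\beta}$ directly from the $n\beta$-component of $D=(\psi_{\beta,\Delta}\sbullet\widetilde E)\pcirc\widetilde D'$, using only that $\psi_{\beta,\Delta}\sbullet\widetilde E$ is supported on $\{0\}\cup P_\beta^\Delta$ and that $\widetilde D'$ vanishes on $P_\beta^\Delta$. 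The paper instead forms $H=(\psi_{\beta,\Delta}\sbullet G^{*})\pcirc(\psi_{\beta,\Delta}\sbullet E)=T\pcirc(D')^{*}$ and invokes Proposition~\ref{Desigualdad 1} once more to show $H_{r\beta}=0$. Your route is shorter and avoids this second appeal to the trichotomy; the paper's route, by contrast, exhibits the symmetry between the two candidate decompositions more explicitly.
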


\begin{proof}
We start proving that the sequence $E:=(E_r:=D_{r\beta})\in
\HS_k(A;m)$. It is clear that $E_0=\Id$. Let us consider $r\geq 1$
and  $x,y\in A$, then
$$
E_r(xy)=D_{r\beta}(xy)=\sum_{\lambda+\sigma=r\beta}
D_\lambda(x)D_\sigma(y)=D_{r\beta}(x)y+xD_{r\beta}(y)+\sum_{
\substack{\lambda +\sigma=r\beta\\ \lambda,\sigma\neq 0}}
D_\lambda(x)D_\sigma(y).
$$
If $[\lambda] \prec [\beta]$, we have $D_\lambda=0$ because
$\lambda\in \s^\Delta_\beta$ and, if $[\beta] \prec [\lambda]$, by
Proposition \ref{Desigualdad 1}, $[\sigma] \prec [\beta]$ so, for
the same reason as before, $D_\sigma=0$. Therefore, the remaining
summands are those for which $[\lambda]=[\sigma]=[\beta]$ and
$$
E_r(xy)=E_r(x)y+xE_r(y)+\sum_{\substack{s\beta+t\beta=r\beta\\
s,t\neq 0}} D_{s\beta}(x)D_{t\beta}(y)=\sum_{s+t=r} E_s(x)E_r(y).
$$
So, we proved that $E\in \HS_k(A;m)$. Let us define
$F:=\psi_{\beta,\Delta} \sbullet E^\ast\in \HS_k^q(A;\Delta)$ and
$D':=F\pcirc D\in \HS_k^q(A;\Delta)$. Hence, $D=F^\ast\pcirc D'=
\left(\psi_{\beta,\Delta} \sbullet E\right)\pcirc D'$, where the
last equality holds since $\psi_{\beta,\Delta}$ has constant
coefficients (this is a very particular case of \cite[Proposition
11]{Na3}) and $(E^\ast)^\ast=E$. It remains to prove the properties
of $D'$.
\medskip

It is clear that $F_\sigma=0$ for all $\sigma\not\in P_\beta^\Delta
\cup\{0\}$ and $F_{r\beta}=E_r^\ast$ for all $r\in \{0,\ldots,m\}$.
Thanks to this, for all $\gamma \in \mathbb N^q$, we have
$$
D'_\gamma=\sum_{\sigma+\lambda=\gamma} F_\sigma \pcirc
D_\lambda=D_\gamma+\sum_{\substack{r\beta+\lambda=\gamma\\ r\neq 0}}
E_r^\ast \pcirc D_\lambda.
$$
Let us assume that $\gamma\in \s_\beta^\Delta$ which implies that
$[\gamma]\prec[\beta]=[r\beta]$ for all $r\neq 0$. By hypothesis,
$D_\gamma=0$. Observe that if $\lambda=0$, then
$[\beta]=[\gamma]\prec [\beta]$ and we have a contradiction, so
$\lambda\neq 0$ and we can apply Proposition \ref{Desigualdad 1}
obtaining that $[\lambda] \prec [\gamma] \prec [\beta]$ and hence,
$\lambda\in \s^\Delta_\beta$. By hypothesis, $D_\lambda=0$ and we
can conclude that $D'_\gamma=0$ for all $\gamma\in \s^\Delta_\beta$.
If $\gamma\in P_\beta^\Delta$, we have $\gamma=t\beta$ for some
$t>0$. From the equality $r\beta+\lambda=t\beta$, we get $\lambda\in
P^\Delta_\beta \cup \{0\}$ and
$$
D'_\gamma=\sum_{r\beta+s\beta=t\beta} E_r^\ast \pcirc
D_{s\beta}=\sum_{r+s=t} E_r^\ast \pcirc E_s=0.
$$
In conclusion, $\supp(D') \subset \{0\} \cup \T^\Delta_\beta
\setminus P^\Delta_\beta$.

Let us assume now that there is $\alpha\in \Delta$ such that
$D_\gamma=0$ for all $\gamma\in P^\Delta_\beta$ with $\gamma \leq
\alpha$ or equivalently, $D_{r\beta}=0$ for all $0<r\beta\leq
\alpha$. Then, $E_r^\ast=0$ for all positive integers $r$ such that
$0<r\beta \leq \alpha$ and, if $\gamma\in \Delta $, $\gamma\leq
\alpha$, we have that
$$
D_\gamma'=\sum_{r\beta+\lambda=\gamma} E_r^\ast \pcirc
D_\lambda=D_\gamma.
$$

To finish the proof we will show the uniqueness. Let us consider
other $T\in\HS_k^q(A;\Delta)$ and $G\in \HS_k(A;m)$ such that
$T_\gamma=0$ for all $\gamma\in \s^\Delta_\beta \sqcup
P^\Delta_\beta$ and
$$
\left(\psi_{\beta,\Delta} \sbullet E\right) \pcirc
D'=D=\left(\psi_{\beta,\Delta} \sbullet G\right)\pcirc T.
$$
From the last equality, we get
$$
H:=\left(\psi_{\beta,\Delta}
\sbullet G^\ast\right)\pcirc \left(\psi_{\beta,\Delta} \sbullet
E\right)=\psi_{\beta,\Delta} \sbullet (G^\ast\pcirc E)=T\pcirc
(D')^\ast
$$
(recall that $\psi_{\beta,\Delta}$ has constant coefficients and see
8. and Proposition 11 of \cite{Na3}). It is easy to see that
$T^\ast_\gamma=(D')_\gamma^\ast=0$ for all $\gamma\in
\s^\Delta_\beta \sqcup P^\Delta_\beta$, so
$T_{r\beta}=(D')^\ast_{r\beta}=0$ for all $r\in \{1,\ldots,m\}$ and
we have that
$$
H_{r\beta}=(G^\ast \pcirc E)_r=(T\pcirc (D')^\ast)_{r\beta}=\sum_{\substack{\lambda+\sigma=r\beta\\ \lambda,\sigma
		\neq 0}} T_\lambda \pcirc (D')^\ast_\sigma.
$$
If $[\lambda] \preceq [\beta]$, then $T_\lambda=0$ because
$\lambda\in \s^\Delta_\beta \sqcup P^\Delta_\beta$ and, if $[\beta]
\prec [\lambda]$, by Proposition \ref{Desigualdad 1}, we get
$[\sigma] \prec[\beta]$ and $(D')^\ast_\sigma=0$. So, $H_{r\beta}=(G^\ast \pcirc E)_r=0$
for all $r\in\{1,\ldots,m\}$. Hence, $G^\ast \pcirc E=\II$ and we
deduce that $G=E$. Now, it is clear that $T=D'$ and we have the
result.	
\end{proof}

In the following theorem, we will prove that any
$\Delta$-variate HS-derivation, where $\Delta$ is a finite
co-ideal, can be decomposed in terms of uni-variate HS-derivations
and substitution maps.

\begin{teo}\label{Descomposicion}
Let us consider a finite co-ideal $\Delta$  and $D\in
\HS_k^q(A;\Delta)$. Let $C:= \#(\C^q_\Delta)$ and
 $\C^q_\Delta = \{\beta^1,\ldots,\beta^C\}$
with $\beta^1 \prec \beta^2 \prec \cdots \prec \beta^C$, and let $m_i
=m^\Delta_{\beta^i}$. Then, there is a unique family $E^i\in
\HS_k(A;m_i)$, $1\leq i\leq C$, such that:
$$ D=\left(\psi_{\beta^1,\Delta} \sbullet E^1\right) \pcirc \left(\psi_{\beta^2,\Delta} \sbullet E^2\right) \pcirc \cdots \pcirc \left(\psi_{\beta^C,\Delta} \sbullet E^C\right).
$$
 Moreover, if for some $a\geq 1$ there is $\alpha\in P^\Delta_{\beta^a}$ such that $D_\gamma=0$ for
all $\gamma\in \s^\Delta_{\beta^a} $ with $\gamma\leq \alpha$, then
$E^{a}_r=D_{r\beta^a}$ for all $r=0,\ldots,
\gcd(\alpha_1,\ldots,\alpha_q)$.
\end{teo}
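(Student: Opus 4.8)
The plan is to prove Theorem \ref{Descomposicion} by induction on the number $C = \#(\C^q_\Delta)$ of equivalence classes present in $\Delta$, using Proposition \ref{Primer paso descomposicion} as the engine that peels off one factor at a time. The proposition is tailor-made for this: it takes a HS-derivation supported on $\T^\Delta_\beta \cup \{0\}$ and, for the \emph{smallest} class $\beta$ present, extracts a unique uni-variate factor $\psi_{\beta,\Delta}\sbullet E$ so that the remaining factor $D'$ is supported on $(\T^\Delta_\beta\setminus P^\Delta_\beta)\cup\{0\}$. Since $\beta^1 \prec \cdots \prec \beta^C$, removing the class $P^\Delta_{\beta^1}$ turns $\T^\Delta_{\beta^1}$ into $\T^\Delta_{\beta^2}$, so $D'$ is exactly of the form required to continue.

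\medskip

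Concretely, I would first observe that an arbitrary $D \in \HS^q_k(A;\Delta)$ satisfies $\supp(D) \subset \Delta = \T^\Delta_{\beta^1}\cup\{0\}$ (since $\beta^1$ is the minimum class, $\s^\Delta_{\beta^1} = \emptyset$), so the hypothesis of Proposition \ref{Primer paso descomposicion} holds vacuously with $\beta = \beta^1$. Applying it yields unique $E^1 \in \HS_k(A;m_1)$ and $D^{(1)} \in \HS^q_k(A;\Delta)$ with $\supp(D^{(1)}) \subset (\T^\Delta_{\beta^1}\setminus P^\Delta_{\beta^1})\cup\{0\} = \T^\Delta_{\beta^2}\cup\{0\}$ and $D = (\psi_{\beta^1,\Delta}\sbullet E^1)\pcirc D^{(1)}$. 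Now $D^{(1)}$ satisfies the hypothesis of the proposition for $\beta = \beta^2$, namely $D^{(1)}_\gamma = 0$ for all $\gamma \in \s^\Delta_{\beta^2}$, so I apply the proposition again to obtain $E^2$ and $D^{(2)}$. Iterating $C$ times produces $E^1,\ldots,E^C$ and factors $D^{(1)},\ldots,D^{(C)}$, where at the final step $\s^\Delta_{\beta^C}\sqcup P^\Delta_{\beta^C} = \Delta\setminus\{0\}$ forces $D^{(C)} = \II$. Composing the extracted factors gives the desired product. Uniqueness propagates because each step of Proposition \ref{Primer paso descomposicion} is itself unique: if two such factorizations existed, comparing them forces $E^1$ and $D^{(1)}$ to coincide, and then one descends inductively.

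\medskip

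For the moreover clause, suppose $\alpha \in P^\Delta_{\beta^a}$ with $D_\gamma = 0$ for all $\gamma \in \s^\Delta_{\beta^a}$, $\gamma \leq \alpha$. The key point is that the first $a-1$ extraction steps do not disturb the relevant coefficients below $\alpha$. At each step $i < a$, the \emph{moreover} part of Proposition \ref{Primer paso descomposicion} (applied with the same $\alpha$) guarantees $D^{(i)}_\gamma = D^{(i-1)}_\gamma$ for all $\gamma \leq \alpha$, provided the vanishing hypothesis $D^{(i-1)}_\gamma = 0$ holds for $\gamma \in P^\Delta_{\beta^i}$ with $\gamma \leq \alpha$; but these $\gamma$ lie in $\s^\Delta_{\beta^a}$ (since $\beta^i \prec \beta^a$ means $P^\Delta_{\beta^i}\subset \s^\Delta_{\beta^a}$), so they vanish by hypothesis and are preserved down the chain. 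Hence $D^{(a-1)}_\gamma = D_\gamma$ for all $\gamma \leq \alpha$, in particular $D^{(a-1)}_{r\beta^a} = D_{r\beta^a}$ for $r\beta^a \leq \alpha$, i.e. $r \leq \gcd(\alpha_1,\ldots,\alpha_q)$. Since $E^a_r = D^{(a-1)}_{r\beta^a}$ by the construction at step $a$, I conclude $E^a_r = D_{r\beta^a}$ for $r = 0,\ldots,\gcd(\alpha_1,\ldots,\alpha_q)$.

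\medskip

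The main obstacle I anticipate is bookkeeping the interaction between the support condition and the \emph{moreover} clause through the induction: I must verify carefully that $\s^\Delta_{\beta^{i+1}} = \s^\Delta_{\beta^i}\sqcup P^\Delta_{\beta^i}$ so that the output support of one step is precisely the input support required by the next, and that the vanishing hypothesis needed by the \emph{moreover} part of the proposition at step $i$ is genuinely implied by the global hypothesis about $\alpha$ (rather than needing to be re-derived). The underlying arithmetic — that $P^\Delta_{\beta^i}\subset \s^\Delta_{\beta^a}$ for $i<a$, and that $r\beta^a \leq \alpha$ iff $r \leq \gcd(\alpha_1,\ldots,\alpha_q)$ when $\alpha \in P^\Delta_{\beta^a}$ — is elementary but must be stated cleanly, since the entire moreover clause hinges on it.
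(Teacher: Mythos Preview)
Your existence argument and the ``moreover'' clause follow the paper's proof essentially verbatim: iterate Proposition \ref{Primer paso descomposicion} along $\beta^1 \prec \cdots \prec \beta^C$, using the identity $\s^\Delta_{\beta^{i+1}} = \s^\Delta_{\beta^i}\sqcup P^\Delta_{\beta^i}$, and for the moreover part track that $D^{(i)}_\gamma = D_\gamma$ for $\gamma\leq\alpha$ through the first $a-1$ steps. This is correct.

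There is, however, a genuine gap in your uniqueness argument. You write that ``comparing them forces $E^1$ and $D^{(1)}$ to coincide'' by the uniqueness in Proposition \ref{Primer paso descomposicion}. But that uniqueness clause requires \emph{both} candidate remainders to satisfy the support condition $\supp \subset (\T^\Delta_{\beta^1}\setminus P^\Delta_{\beta^1})\cup\{0\}$. For the $D^{(1)}$ produced by the algorithm this holds by construction; for the tail $T^1 := (\psi_{\beta^2,\Delta}\sbullet F^2)\pcirc\cdots\pcirc(\psi_{\beta^C,\Delta}\sbullet F^C)$ of an \emph{arbitrary} competing factorization it does not come for free. One must show that $T^1_\gamma = 0$ for all $\gamma \in P^\Delta_{\beta^1}$ (and more generally that $T^s_\gamma = 0$ for $\gamma \in \s^\Delta_{\beta^{s+1}}$). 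The paper does this via Lemma \ref{La suma de vectores}: any $\gamma$ expressible as a sum $\lambda_{s+1}+\cdots+\lambda_C$ with $\lambda_j \in P^\Delta_{\beta^j}\cup\{0\}$ (not all zero) has class $\succeq [\beta^{s+1}]$, hence cannot lie in $\s^\Delta_{\beta^{s+1}}$. Without this step, the appeal to Proposition \ref{Primer paso descomposicion} for uniqueness is circular. You should insert this verification (invoking Lemma \ref{La suma de vectores} or repeating its short inductive argument from Proposition \ref{Desigualdad 1}) before claiming $E^1 = F^1$.
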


\begin{proof} We will obtain the $E^i$'s recursively.
Since $\s^\Delta_{\beta^1}=\emptyset$, we can apply Proposition
\ref{Primer paso descomposicion} and we obtain (unique) $E^1\in
\HS_k(A;m_1)$ and $D^1\in \HS^q_k(A; \Delta)$ such that
$$
D=\left(\psi_{\beta^1,\Delta} \sbullet E^1\right)\pcirc D^1
$$
and $D^1_\gamma=0$ for all $\gamma\in
P_{\beta^1}^\Delta=\s^\Delta_{\beta^2}$. Let us assume that for some 
$s\in \mathbb N$, $1\leq s< C$, there 
exist  $E^i\in
\HS_k(A;m_i)$, for $i=1,\ldots, s$, and $D^s\in \HS^q_k(A;
\Delta)$ such that
$$
D= \circ_{i=1}^s \left(\psi_{\beta^i,\Delta} \sbullet E^i\right)
\pcirc D^s
$$
and $D^s_\gamma=0$ for all $\gamma \in \s^\Delta_{\beta^{s+1}}$. If
$s<C-1$, we can apply Proposition \ref{Primer paso descomposicion}
to $D^s$ taking $\beta=\beta^{s+1}$ and we obtain unique $E^{s+1}\in
\HS_k(A;m_{s+1})$ and $D^{s+1}\in \HS_k^q(A;\Delta)$ such that
$D^s=\left(\psi_{\beta^{s+1},\Delta} \sbullet E^{s+1}\right) \pcirc
D^{s+1}$ and $D^{s+1}_\gamma=0$ for all $\gamma \in
\s^\Delta_{\beta^{s+1}} \sqcup
P^\Delta_{\beta^{s+1}}=\s^\Delta_{\beta^{s+2}}$. Hence, we get
$$
D=\circ_{i=1}^{s+1} \left(\psi_{\beta^i,\Delta} \sbullet E^i\right)
\pcirc D^{s+1}.
$$
Let us assume now that $s=C-1$. Let us notice that
$\supp(D^{C-1})\subseteq P^\Delta_{\beta^C} \cup \{0\}$ and we can
write $D^{C-1}=\psi_{\beta^C,\Delta} \sbullet E^C$, where $E^C\in
\HS_k(A;m_C)$ so,
$$
D=\left( \psi_{\beta^1,\Delta} \sbullet E^1\right) \pcirc \cdots
\pcirc \left(\psi_{\beta^C,\Delta} \sbullet E^C\right).
$$
To prove the uniqueness, let us consider another family $F^i\in
\HS_k(A;m_i)$, $1\leq i\leq C$, such that
$$
D= \left(\psi_{\beta^1,\Delta} \sbullet F^1\right) \pcirc \left(
\psi_{\beta^2, \Delta} \sbullet F^2\right) \pcirc \cdots \pcirc
\left(\psi_{\beta^C,\Delta} \sbullet F^C\right).
$$
We denote $T^s=\left(\psi_{\beta^{s+1},\Delta}\sbullet
F^{s+1}\right)\pcirc \cdots \pcirc \left(\psi_{\beta^C,\Delta}
\sbullet F^C\right)\in \HS_k^q(A;\Delta)$ (we put $T^C=\II$). We
will prove that $T^s_\gamma=0$ for all $\gamma \in
\s_{\beta^{s+1}}^\Delta=\s_{\beta^s}^\Delta \sqcup
P^\Delta_{\beta^s}$. Since $\left(\psi_{\beta^i,\Delta} \sbullet
F^i\right)_\lambda=0$ for all $\lambda\not\in P^\Delta_{\beta^i}
\cup \{0\}$, we have that
$$
T^s_\gamma=\sum_{\substack{\lambda_{s+1}+\cdots +\lambda_C=\gamma\\
\lambda_i\in P^\Delta_{\beta^i} \cup \{0\}}}
\left(\psi_{\beta^{s+1},\Delta}\sbullet
F^{s+1}\right)_{\lambda_{s+1}} \pcirc \cdots \pcirc
\left(\psi_{\beta^{C},\Delta}\sbullet F^{C}\right)_{\lambda_{C}}
$$
for all $\gamma\in \Delta$. By Lemma \ref{La suma de vectores}, if
$\gamma\in \s_{\beta^{s+1}}^\Delta$, we have that $[\gamma]\preceq
[\beta^s]\prec[\beta^i]=[\lambda_i]\preceq [\lambda_i+\cdots
+\lambda_C]$, where $i=\min\{ i\in \N_+ \ | \ s+1\leq i\leq C,\
\lambda_i\neq 0\}$. Hence, we can deduce that $T^s_\gamma=0$ for all
$\gamma\in \s_{\beta^{s+1}}^\Delta$.

On the other hand, with the previous notation, we have that
$D=\circ_{i=1}^s \left(\psi_{\beta^i,\Delta} \sbullet
E^i\right)\pcirc D^s$ where $D^s_\gamma=0$ for all $\gamma\in
\s_{\beta^{s+1}}^\Delta$. We will prove that $E^s=F^s$  by induction
on $1\leq s\leq C$. If $s=1$, $D=\left(\psi_{\beta^1,\Delta}
\sbullet E^1\right) \pcirc D^1= \left(\psi_{\beta^1,\Delta}\sbullet
F^1\right)\pcirc T^1$. Thanks to Proposition \ref{Primer paso
descomposicion}, we can deduce that $E^1=F^1$. Let us assume that
$E^i=F^i$ for all $1\leq i<s\leq C$. Then, we have that
$$
D= \circ_{i=1}^{s-1} \left(\psi_{\beta^i,\Delta}\sbullet E^i\right)
\pcirc \left(\psi_{\beta^s,\Delta} \sbullet E^s\right)\pcirc
D^s=\circ_{i=1}^{s-1} \left(\psi_{\beta^i,\Delta}\sbullet E^i\right)
\pcirc \left( \psi_{\beta^s,\Delta} \sbullet F^s\right)\pcirc T^s.
$$
Therefore, $ \left(\psi_{\beta^s,\Delta} \sbullet E^s\right) \pcirc
D^s=\left( \psi_{\beta^s,\Delta}\sbullet F^s\right)\pcirc T^s. $ If
$s=C$,  then it is clear that $E^C=F^C$ ($D^C=\II$) and if $s<C$, we
have that $E^s=F^s$ by Proposition \ref{Primer paso descomposicion}.

Observe that, from the proof of Proposition \ref{Primer paso
descomposicion}, we have that the $r$-component of  $E^s\in
\HS_k(A;m_s)$ is $E^s_r=D^{s-1}_{r\beta^s}$ (we put $D^0=D$). Let us
assume that there is $\alpha\in P^\Delta_{\beta^a}$ such that
$D_\gamma=0$ for all $\gamma \in \s_{\beta^a}^\Delta$ with
$\gamma\leq \alpha$. To see that $E^a_r=D_{r\beta^a}$ for
$r=0,\ldots, \gcd(\alpha_1,\ldots,\alpha_q)$, it is enough to prove
that $D^{a-1}_\gamma=D_\gamma$ for all $\gamma\in \Delta$ with
$\gamma\leq \alpha$ (note that $r\beta^a\leq \alpha$ for all
$r=0,\ldots, \gcd(\alpha_1,\ldots,\alpha_q)$). If $a=1$, then the
result is clear, so  let us assume that $a>1$. We will prove, by
induction on $s=1,\ldots, a-1$, that $D^s_\gamma=D_\gamma$ for all
$\gamma\in \Delta$ with $\gamma\leq \alpha$.

Let us consider $s=1$. Since $\beta^1 \prec\beta^a$, by definition,
$P^\Delta_{\beta^1}\subseteq \s_{\beta^a}^\Delta$. So, $D_\gamma=0$
for all $\gamma\in P^\Delta_{\beta^1}$ with $\gamma\leq \alpha$, and
by Proposition \ref{Primer paso descomposicion},
$D^1_\gamma=D_\gamma$ for all $\gamma\in \Delta $ with $\gamma\leq
\alpha$. Let us assume that, for $s<a-1$, we have that
$D^s_\gamma=D_\gamma$ for all $\gamma\in \Delta $ with $\gamma\leq
\alpha$. In particular, since $\beta^{s+1} \prec \beta^a$,
$D^s_\gamma=0$ for all $\gamma\in P^\Delta_{\beta^{s+1}} \subseteq
\s^{\Delta}_{\beta^a} $ with $\gamma\leq \alpha$.  Recall that
$D^{s+1}$ is obtained applying  Proposition \ref{Primer paso
descomposicion} to $D^s$ with  $\beta=\beta^{s+1}$ so, we deduce
that $D^{s+1}_\gamma=D^s_{\gamma}=D_\gamma$ for all $\gamma\in
\Delta $ with $\gamma\leq \alpha$ and we have the result.
\end{proof}

\begin{cor} Let us consider  a finite co-ideal $\Delta$ and  $D\in \HS_k^q(A;\Delta)$. Let $C:= \#(\C^q_\Delta)$ and $\C^q_\Delta =
\{\beta^1,\beta^s,\ldots, \beta^C\}$ with $\beta^1\prec \beta^2 \prec
\cdots \prec \beta^C$, and let $m_i =m^\Delta_{\beta^i}$. Then,
there is a unique family $E^i\in \HS_k(A;m_i)$, $1\leq i\leq C$,
such that:
$$
D=\psi_\Delta \sbullet \left(E^{1} \boxtimes \cdots \boxtimes
E^{C}\right)
$$
where
$$
\begin{array}{rcccc}
\psi_\Delta:&A\llbracket t_1,\ldots, t_C\rrbracket_{\nabla} &\to &
A\llbracket s_1,\ldots,s_q\rrbracket_\Delta \\
& t_i &\mapsto & s_1^{\beta^i_1}\cdots s_q^{\beta^i_q} & \forall
i=1,\ldots, C
\end{array}
$$
with $\nabla= \{ \gamma\in \N^C \ | \ \gamma \leq
(m_1,\ldots,m_C)\}$.
\end{cor}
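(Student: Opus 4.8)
The plan is to read this corollary as a pure reformulation of Theorem \ref{Descomposicion}: I will show that the ordered composition $\circ_{i=1}^{C}\left(\psi_{\beta^i,\Delta}\sbullet E^i\right)$ produced there is literally equal to $\psi_\Delta\sbullet\left(E^1\boxtimes\cdots\boxtimes E^C\right)$. Once this identity is established, both the existence and the uniqueness of the family $(E^i)$ are inherited verbatim from that theorem, with no further work. Note first that every object is well defined: each $\beta^i\neq 0$, so $\psi_\Delta$ is a (monomial) substitution map, and $E^1\boxtimes\cdots\boxtimes E^C\in\HS^C_k(A;\nabla)\subset A\llbracket t_1,\ldots,t_C\rrbracket_\nabla$, so that $\psi_\Delta\sbullet\left(E^1\boxtimes\cdots\boxtimes E^C\right)$ makes sense.

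To prove the identity I would first unwind the external product. For $1\le i\le C$ let $e_i$ be the $i$-th canonical basis vector of $\N^C$ and let $\iota_i:A\llbracket u\rrbracket_{m_i}\to A\llbracket t_1,\ldots,t_C\rrbracket_\nabla$ be the substitution map $u\mapsto t_i$. Then $(\iota_i\sbullet E^i)_\gamma$ equals $E^i_r$ if $\gamma=r e_i$ and $0$ otherwise, so a direct comparison of $\gamma$-components gives
$$E^1\boxtimes\cdots\boxtimes E^C=\circ_{i=1}^{C}\left(\iota_i\sbullet E^i\right),$$
both sides having $\gamma$-component $E^1_{\gamma_1}\pcirc\cdots\pcirc E^C_{\gamma_C}$. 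Now I would act by $\psi_\Delta$. As $\psi_\Delta$ is monomial it has constant coefficients, hence it distributes over this finite ordered composition (this is \cite[Proposition 11]{Na3}, the property already used in the proof of Proposition \ref{Primer paso descomposicion}); and the action is functorial under composition of substitution maps (item 8 and Proposition 10 of \cite{Na3}), so $\psi_\Delta\sbullet(\iota_i\sbullet E^i)=(\psi_\Delta\circ\iota_i)\sbullet E^i$. Since $\psi_\Delta\circ\iota_i$ sends $u\mapsto t_i\mapsto s_1^{\beta^i_1}\cdots s_q^{\beta^i_q}$, it is exactly $\psi_{\beta^i,\Delta}$. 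Chaining these equalities yields
$$\psi_\Delta\sbullet\left(E^1\boxtimes\cdots\boxtimes E^C\right)=\circ_{i=1}^{C}\left(\psi_{\beta^i,\Delta}\sbullet E^i\right)=D,$$
the last step being Theorem \ref{Descomposicion}.

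I do not expect any real obstacle here, only bookkeeping: one must check that the co-ideal of $E^1\boxtimes\cdots\boxtimes E^C$ is precisely $\nabla=\{\gamma\le(m_1,\ldots,m_C)\}$, so that the domain of $\psi_\Delta$ is the correct one, and that the constant-coefficient distributivity is invoked for a genuinely finite composition, which holds because $\Delta$ is finite and hence $C<\infty$. If one prefers to avoid the functoriality machinery of \cite{Na3}, the displayed identity can instead be checked componentwise: the $\alpha$-component of $\psi_\Delta\sbullet\left(E^1\boxtimes\cdots\boxtimes E^C\right)$ is the sum of $E^1_{\gamma_1}\pcirc\cdots\pcirc E^C_{\gamma_C}$ over those $\gamma\in\nabla$ with $\gamma_1\beta^1+\cdots+\gamma_C\beta^C=\alpha$, and this coincides with the $\alpha$-component of $\circ_{i=1}^{C}\left(\psi_{\beta^i,\Delta}\sbullet E^i\right)$ because each factor $\psi_{\beta^i,\Delta}\sbullet E^i$ is supported on $\N_+\beta^i\cup\{0\}$, which reduces the composition sum to the very same index set. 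Uniqueness of the family is then immediate from the uniqueness already established in Theorem \ref{Descomposicion}.
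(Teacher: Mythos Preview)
Your proposal is correct and follows essentially the same approach as the paper: both reduce the corollary to Theorem \ref{Descomposicion} via the identity $\psi_\Delta\sbullet(E^1\boxtimes\cdots\boxtimes E^C)=\circ_{i=1}^{C}(\psi_{\beta^i,\Delta}\sbullet E^i)$, valid for any family $(E^i)$. The paper simply asserts this identity as ``easy to see'', whereas you supply the details (via the factorization through the $\iota_i$ and the constant-coefficient distributivity, or alternatively the componentwise check), so your argument is a faithful expansion of the paper's.
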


\begin{proof}
Let us consider any family $E^i\in \HS_k(A;m_i)$, $1\leq i\leq C$.
Then, it is easy to see that
$$
\psi_{\Delta}\sbullet\left(E^{1} \boxtimes \cdots \boxtimes
E^{C}\right)= \left(\psi_{\beta^1, \Delta} \sbullet E^1\right)
\pcirc \cdots \pcirc \left(\psi_{{\beta^C}, \Delta} \sbullet
E^C\right).
$$
By Theorem \ref{Descomposicion}, there exists a unique family
$E^i\in \HS_k(A;m_i)$ such that $D=\left(\psi_{\beta^1,\Delta}
\sbullet E^1\right) \pcirc \cdots \pcirc \left(\psi_{\beta^C,
\Delta} \sbullet E^C\right)$ and, from the previous equality, we get
$D=\psi_\Delta \sbullet \left(E^1 \boxtimes \cdots \boxtimes E^C
\right)$. If we take another family $F^i\in \HS_k(A;m_i)$, $1\leq
i\leq C$, such that $D=\psi_\Delta \sbullet \left(F^1 \boxtimes
\cdots \boxtimes F^C \right)$. Then, $D=\left(\psi_{\beta^1,\Delta}
\sbullet F^1\right) \pcirc \cdots \pcirc \left(\psi_{\beta^C,
\Delta} \sbullet F^C\right)$ and, by Theorem \ref{Descomposicion},
we deduce that $E^i=F^i$ so, we have the result.
\end{proof}

\begin{ej}{\rm
Let us consider $q=2$, $\Delta=\{ \gamma \in \N^2 \ | \ \gamma \leq
(2,2)\}$ and $D\in \HS_k^2(A;\Delta)$. Then
$\C^2_\Delta=\{\beta^1=(0,1),\beta^2=(1,2),
\beta^3=(1,1),\beta^4=(2,1),\beta^5=(1,0)\}$ and $\beta^1\prec
\cdots \prec \beta^5$. Moreover, it is easy to see that
$m_{\beta^1}^\Delta=m_{\beta^3}^\Delta=m^\Delta_{\beta^5}=2$ and
$m^\Delta_{\beta^2}=m^\Delta_{\beta^4}=1$. We can see $\Delta$ as
follows:

\begin{table}[H]
	\begin{center}
\begin{tabular}{cccccc}
\begin{tikzpicture}[scale=1]
\draw [very thin, gray] (0,0) grid (2,2); \draw[->] (0,0) --
(2.5,0); \draw (2.5,0) node[above]{$\alpha_1$};
 \draw[->] (0,0) -- (0,2.5);
 \draw (0,2.5) node[right]{$\alpha_2$};

 \filldraw[color=red]  (1,0) circle (2pt);
\draw (1,0) node [below]  {$\beta^5$};
 \filldraw[color=red]  (0,1) circle (2pt);
\draw (0,1) node [left]  {$\beta^1$};
 \filldraw[color=red] (1,1) circle (2pt);
\draw (1,1) node [above right]  {$\beta^3$}; 
 \filldraw[color=red] (1,2) circle (2pt);
\draw (1,2) node [above]  {$\beta^2$}; 
 \filldraw[color=red] (2,1) circle (2pt);
\draw (2,1) node [right]  {$\beta^4$};

 \filldraw (2,0) circle (2pt);
 \filldraw (0,2) circle (2pt);
 \filldraw (2,2) circle (2pt);

\draw[color=blue, line width=0.03cm] (0,0) --(0,2);
\end{tikzpicture}
\end{tabular}
\end{center}
\caption{The co-ideal $\Delta$.}
\end{table}

In this picture, the elements of $\Delta$ are represented with a circle that will be red if the element
belongs to $\C^2_\Delta$.  It is clear that the components of $D$ whose index is on the blue line (vertical axis) form a HS-derivation of length 2. In fact, according with the previous theorem, the first step to decompose a $\Delta$-variate HS-derivation is to take that HS-derivation $E^{1}=(\Id, D_{\beta^1}, D_{2\beta^1})=(\Id, D_{(0,1)},D_{(0,2)})\in \HS_k(A;2)$ and the substitution map $\psi_{\beta^1,\Delta}: A\llbracket\mu\rrbracket_2 \ni \mu \mapsto
s_2\in A\llbracket s_1,s_2\rrbracket_\Delta$. Then,
$$
D=\left(\psi_{\beta^1,\Delta} \sbullet E^{1}\right)\pcirc D^1
$$
where $D^1=\left( \psi_{\beta^1,\Delta} \sbullet
\left(E^{1}\right)^\ast\right)\pcirc D$, i.e.
\begin{table}[H]
\begin{center}
\begin{tabular}{cccccc}
\begin{tikzpicture}[scale=1]
\draw [very thin, gray] (0,0) grid (2,2); \draw[->] (0,0) --
(2.5,0); \draw (2.5,0) node[above]{$\alpha_1$};
 \draw[->] (0,0) -- (0,2.5);
 \draw (0,2.5) node[right]{$\alpha_2$};

\draw(1,-1) node{\small{$D$ }};

 \filldraw[color=red]  (1,0) circle (2pt);
\filldraw[color=red]  (0,1) circle (2pt);
\filldraw[color=red] (1,1) circle (2pt);
\filldraw[color=red] (1,2) circle (2pt);
\filldraw[color=red] (2,1) circle (2pt);
\filldraw (2,0) circle (2pt);
\filldraw (0,2) circle (2pt);
\filldraw (2,2) circle (2pt);

\draw[color=blue, line width=0.03cm] (0,0) --(0,2);
 \draw(3,1) node{=};
\end{tikzpicture}

&

\begin{tikzpicture}[scale=1]
\draw [very thin, gray] (0,0) grid (2,2); \draw[->] (0,0) --
(2.5,0); \draw (2.5,0) node[above]{$\alpha_1$};
 \draw[->] (0,0) -- (0,2.5);
 \draw (0,2.5) node[right]{$\alpha_2$};

\draw(1,-1) node{\small{$\psi_{\beta^1,\Delta}\sbullet E^{1}$}};

 \draw (1,0) node[below]  {0};
 \filldraw[color=red] (0,1) circle (2pt);
  \draw (1,1) node[above right]  {0};
  \draw (1,2) node[above]  {0};
  \draw  (2,1) node[right] {0};

  \draw (2,0) node[below]  {0};
 \filldraw (0,2) circle (2pt);
 \draw (2,2) node[above]  {0};

\draw[color=blue, line width=0.03cm] (0,0) --(0,2);

\draw(3,1) node{$\circ$};
\end{tikzpicture}
&
\begin{tikzpicture}[scale=1]
\draw [very thin, gray] (0,0) grid (2,2); \draw[->] (0,0) --
(2.5,0); \draw (2.5,0) node[above]{$\alpha_1$};
 \draw[->] (0,0) -- (0,2.5);
 \draw (0,2.5) node[right]{$\alpha_2$};

\draw(1,-1) node{\small{$D^1$}};

 \filldraw[color=red]  (1,0) circle (2pt);
\draw (0,1) node[left] {0};
\filldraw[color=red] (1,1) circle (2pt);
\filldraw[color=red] (1,2) circle (2pt);
\filldraw[color=red] (2,1) circle (2pt);

\filldraw (2,0) circle (2pt);
\draw (0,2) node[left] {$0$};
\filldraw (2,2) circle (2pt);

 \draw[color=blue] (0,0) --(1,2);
\end{tikzpicture}

\end{tabular}
\end{center}
\caption{First step of the decomposition of $D$.}
\end{table}
If we continue with the steps of the proof of the theorem, we have
to decompose $D^1$ using Proposition \ref{Primer paso
descomposicion}. Since $D^1_\gamma=0$ for all $\gamma \in
P^\Delta_{\beta^1}=\s^\Delta_{\beta^2}=\{(0,1),(0,2)\}$, we have
that $E^2=(\Id, D^1_{\beta^2})= (\Id,D_{(1,2)}-D_{(0,1)}D_{(1,1)}-
D_{(0,2)}D_{(1,0)}+D^2_{(0,1)}D_{(1,0)})\in \HS_k(A;1)\in
\HS_k(A;1)$ (blue line in $D^1$). Now, we can decompose $D^1$ as
$$
D^1=\left(\psi_{\beta^2,\Delta} \sbullet E^2\right) \pcirc D^2,
$$
where $\psi_{\beta^2,\Delta}: A\llbracket\mu\rrbracket_1 \ni
\mu\mapsto s_1s_2^2\in A\llbracket s_1,s_2\rrbracket_\Delta$ and
$D^2=\left(\psi_{\beta^2,\Delta} \sbullet (E^2)^\ast\right)\pcirc
D^1= \left(\psi_{\beta^2,\Delta} \sbullet (E^2)^\ast\right)\pcirc
\left(\psi_{\beta^1,\Delta} \sbullet (E^1)^\ast\right) \pcirc D\in
\HS^2(A;\Delta)$ with $D^2_\gamma=0$ for all $\gamma \in
\s^\Delta_{\beta^2} \sqcup
P^\Delta_{\beta^2}=\{(0,1),(0,2),(1,2)\}$: 
\begin{table}[H]
\begin{center}
	\begin{tabular}{cccccc}
		\begin{tikzpicture}[scale=1]
\draw [very thin, gray] (0,0) grid (2,2); \draw[->] (0,0) --
	(2.5,0); \draw (2.5,0) node[above]{$\alpha_1$};
	\draw[->] (0,0) -- (0,2.5);
\draw (0,2.5) node[right]{$\alpha_2$};
				
\draw(1,-1) node{\small{$D^1$ }};
				
\filldraw[color=red]  (1,0) circle (2pt);
\draw (0,1) node[left]{$0$}; 
\filldraw[color=red] (1,1) circle (2pt);
 \filldraw[color=red] (1,2) circle (2pt);
\filldraw[color=red] (2,1) circle (2pt);

\filldraw (2,0) circle (2pt);
\draw (0,2) node[left] {0};
\filldraw (2,2) circle (2pt);

\draw[color=blue] (0,0) --(1,2);
\draw(3,1) node{=};
\end{tikzpicture}
			
&
			
\begin{tikzpicture}[scale=1]
\draw [very thin, gray] (0,0) grid (2,2); \draw[->] (0,0) --
(2.5,0); \draw (2.5,0) node[above]{$\alpha_1$};
\draw[->] (0,0) -- (0,2.5);
\draw (0,2.5) node[right]{$\alpha_2$};

\draw(1,-1) node{\small{$\psi_{\beta^2,\Delta}\sbullet E^{2}$}};

\draw (1,0) node[below]  {0};
\draw (0,1) node[left] {0};
\draw (1,1) node[above right]  {0};
\draw  (2,1) node[right] {0};
\draw (2,0) node[below]  {0};				
\filldraw[color=red] (1,2) circle (2pt);
\draw (2,2) node[above]  {0};
\draw (0,2) node[left] {0};

\draw[color=blue] (0,0) --(1,2);			
				
\draw(3,1) node{$\circ$};
\end{tikzpicture}		
&
\begin{tikzpicture}[scale=1]				
\draw [very thin, gray] (0,0) grid (2,2); \draw[->] (0,0) --
		(2.5,0); \draw (2.5,0) node[above]{$\alpha_1$};
				\draw[->] (0,0) -- (0,2.5);
				\draw (0,2.5) node[right]{$\alpha_2$};

				\draw(1,-1) node{\small{$D^2$}};

\filldraw[color=red]  (1,0) circle (2pt);
\draw (0,1) node[left] {0};
\filldraw[color=red] (1,1) circle (2pt);
\draw (1,2) node[above] {0};
\filldraw[color=red] (2,1) circle (2pt);
				
\filldraw (2,0) circle (2pt);
\draw (0,2) node[left] {0};
\filldraw(2,2) circle (2pt);

				\draw[color=blue] (0,0) --(2,2);
			\end{tikzpicture}
			
		\end{tabular}
	\end{center}
	\caption{Second step of the decomposition of $D$.}
\end{table}

Hence,
$$D=\left(\psi_{\beta^1,\Delta} \sbullet E^1\right) \pcirc
\left(\psi_{\beta^2,\Delta}\sbullet E^2\right)\pcirc D^2.
$$

If we continue with the process described in the proof of the
previous theorem, we can find the decomposition of $D$. In this
case,
$$
D=\left(\psi_{\beta^1,\Delta} \sbullet E^{1}\right) \pcirc
\left(\psi_{\beta^2,\Delta} \sbullet E^{2}\right) \pcirc
\left(\psi_{\beta^3,\Delta} \sbullet E^{3}\right) \pcirc
\left(\psi_{\beta^4,\Delta} \sbullet E^{4}\right) \pcirc
\left(\psi_{\beta^5,\Delta} \sbullet E^{5}\right)
$$
where $E^{3}=(\Id,D^2_{\beta^3},
D^2_{2\beta^3})=(\Id,D_{(1,1)}-D_{(0,1)}D_{(1,0)}, E^3_2)\in
\HS_k^2(A;2), \ E^{4}=(\Id, D_{\beta^4}^3)=(\Id,
D_{(2,1)}-D_{(0,1)}D_{(2,0)}-D_{(1,1)}D_{(1,0)}+D_{(0,1)}D^2_{(1,0)})\in
\HS_k(A;1) $ and $E^{5}=(\Id,D^4_{\beta^5},
D^4_{2\beta^5})=(\Id,D_{(1,0)},D_{(2,0)})\in \HS_k(A;2)$ (let us
notice that the components of $E^i$ are those whose indices are on
the line through $\beta^i$ and $(0,0)$ in the graphical
representation of $\Delta$) with
$$
E^3_2=D_{(2,2)}-D_{(0,1)}D_{(2,1)}-D_{(0,2)}D_{(2,0)}-D_{(1,2)}D_{(1,0)}+
D^2_{(0,1)}D_{(2,0)}+D_{(0,1)}D_{(1,1)}D_{(1,0)}
+D_{(0,2)}D_{(1,0)}^2- D_{(0,1)}^2D_{(1,0)}^2,$$
$D^i=\left(\circ_{j=i}^1 \psi_{\beta^j, \Delta} \sbullet
(E^j)^\ast\right)\pcirc D$ for $i=3,4$  and the substitution maps
are $ \psi_{\beta^3,\Delta}: A\llbracket\mu\rrbracket_2\ni
\mu\mapsto s_1s_2\in A\llbracket s_1,s_2\rrbracket_\Delta; \
\psi_{\beta^4,\Delta}: A\llbracket\mu\rrbracket_1\ni \mu\mapsto
s_1^2s_2\in A\llbracket s_1,s_2\rrbracket_\Delta $ and $
\psi_{\beta^5,\Delta}: A\llbracket\mu\rrbracket_2\ni \mu \mapsto
s_1\in A\llbracket s_1,s_2\rrbracket_\Delta$.}
\end{ej}

The next corollary provides a way of dealing with infinite co-ideals
via finite approximations (in the sense of Definition \ref{def:infinite-comp}).

\begin{cor}\label{Descomposicion infinita}
Let us consider  a co-ideal $\Delta\subseteq \N^q$ and $D\in
\HS^q_k(A; \Delta)$. Let us denote $m_\beta=m^\Delta_\beta$ for
$\beta \in \C^q_\Delta$. Then, there exists a unique family
$E^\beta\in \HS_k(A;m_\beta)$, for $\beta \in \C^q_\Delta$, such
that the family $\psi_{\beta, \Delta} \sbullet E^\beta$, $\beta \in
\C^q_\Delta$, is composable (see Definition \ref{def:infinite-comp})
and
$$
D= \circ_{\beta \in \C^q_\Delta} \left( \psi_{\beta, \Delta}
\sbullet E^\beta \right).
$$
Moreover, if there is $\alpha\in P^\Delta_\beta$ for some $\beta\in
\C^q_\Delta$ such that $D_\gamma=0$ for all $\gamma\in
\s^\Delta_{\beta} $ with $\gamma\leq \alpha$, then
$E^{\beta}_n=D_{n\beta}$ for all $n=0,\ldots,
\gcd(\alpha_1,\ldots,\alpha_q)$.
\end{cor}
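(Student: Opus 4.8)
The plan is to reduce to the finite case of Theorem~\ref{Descomposicion} by truncating $D$ to finite co-ideals and passing to the inverse limit~\eqref{eq:U-inv-limit-finite}. For each finite co-ideal $\Delta'\subseteq\Delta$ I apply Theorem~\ref{Descomposicion} to $\tau_{\Delta\Delta'}(D)\in\HS^q_k(A;\Delta')$, obtaining a unique family $E^{\beta,\Delta'}\in\HS_k(A;m^{\Delta'}_\beta)$ indexed by $\beta\in\C^q_{\Delta'}=\C^q\cap\Delta'$.

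First I would establish that these finite decompositions are compatible under truncation. For finite co-ideals $\Delta''\subseteq\Delta'\subseteq\Delta$ and $\beta\in\C^q_{\Delta''}$ one has $m^{\Delta''}_\beta\le m^{\Delta'}_\beta$, and the monomial substitution maps $\psi_{\beta,\Delta'}$ and $\psi_{\beta,\Delta''}$ fit into a commutative square with the truncations on the source (the uni-variate length-$m_\beta$ side) and on the target ($A\llbracket\bfs\rrbracket$ side). Together with the naturality of the $\sbullet$-action with respect to truncations (a particular case of \cite{Na3}), applying the group homomorphism $\tau_{\Delta'\Delta''}$ to the $\Delta'$-decomposition produces a $\Delta''$-decomposition of $\tau_{\Delta\Delta''}(D)$. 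By the uniqueness in Theorem~\ref{Descomposicion}, $E^{\beta,\Delta''}$ is then exactly the length-$m^{\Delta''}_\beta$ truncation of $E^{\beta,\Delta'}$. Thus for fixed $\beta\in\C^q_\Delta$ the system $(E^{\beta,\Delta'})_{\Delta'\ni\beta}$ is coherent, and I set $E^\beta:=\varprojlim_{\Delta'}E^{\beta,\Delta'}\in\HS_k(A;m_\beta)$.

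Next I would check composability and that the ordered composition recovers $D$. For any finite $\Delta'$ we have $\tau_{\Delta\Delta'}(\psi_{\beta,\Delta}\sbullet E^\beta)=\psi_{\beta,\Delta'}\sbullet\tau(E^\beta)$, and this equals $\II$ whenever $\beta\notin\Delta'$, since then $\psi_{\beta,\Delta'}(\mu)=\bfs^\beta=0$ in $A\llbracket\bfs\rrbracket_{\Delta'}$. Hence the index set $I_{\Delta'}$ of Definition~\ref{def:infinite-comp} is contained in the finite set $\C^q_{\Delta'}$, so the family $(\psi_{\beta,\Delta}\sbullet E^\beta)_{\beta\in\C^q_\Delta}$ is composable. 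By the definition of ordered composition, its $\Delta'$-truncation is the $\prec$-ordered composite $\circ_{\beta\in\C^q_{\Delta'}}(\psi_{\beta,\Delta'}\sbullet E^{\beta,\Delta'})$, which by Theorem~\ref{Descomposicion} equals $\tau_{\Delta\Delta'}(D)$. As this holds for all finite $\Delta'$, formula~\eqref{eq:U-inv-limit-finite} forces $\circ_{\beta\in\C^q_\Delta}(\psi_{\beta,\Delta}\sbullet E^\beta)=D$. Uniqueness follows the same template: a competing composable family $(F^\beta)$ with the same ordered composition truncates, for each finite $\Delta'$, to a $\Delta'$-decomposition of $\tau_{\Delta\Delta'}(D)$, so Theorem~\ref{Descomposicion} gives that the length-$m^{\Delta'}_\beta$ truncation of $F^\beta$ agrees with that of $E^\beta$; letting $\Delta'$ grow yields $F^\beta=E^\beta$.

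For the final assertion I would fix a finite co-ideal $\Delta'\subseteq\Delta$ with $\alpha\in\Delta'$ (e.g. $\Delta'=\{\gamma:\gamma\le\alpha\}$), which automatically contains $\beta$ and hence $\beta\in\C^q_{\Delta'}$. Since the ordering $\prec$ is intrinsic, $\s^{\Delta'}_\beta=\s^\Delta_\beta\cap\Delta'$, so the hypothesis localizes to $\tau_{\Delta\Delta'}(D)_\gamma=0$ for all $\gamma\in\s^{\Delta'}_\beta$ with $\gamma\le\alpha$. The ``Moreover'' part of Theorem~\ref{Descomposicion} then gives $E^{\beta,\Delta'}_n=\tau_{\Delta\Delta'}(D)_{n\beta}=D_{n\beta}$ for $n\le\gcd(\alpha_1,\ldots,\alpha_q)$, and since $\alpha=\gcd(\alpha_1,\ldots,\alpha_q)\,\beta$ and $E^\beta_n=E^{\beta,\Delta'}_n$ in this range, the claim follows. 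The main obstacle is the compatibility square of the second paragraph: one must verify cleanly that truncation intertwines $\psi_{\beta,\Delta}$, $\psi_{\beta,\Delta'}$ and the $\sbullet$-action, so that the uniqueness in Theorem~\ref{Descomposicion} can be used to glue; the rest is a formal inverse-limit argument.
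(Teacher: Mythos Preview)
Your proposal is correct and follows essentially the same approach as the paper: reduce to the finite case via Theorem~\ref{Descomposicion}, use its uniqueness to show the truncated decompositions are compatible (the key ingredient being precisely the truncation--substitution compatibility \eqref{truncacion psi} you flag), take the inverse limit to build the $E^\beta$, and verify composability, the equality $D=\circ_\beta(\psi_{\beta,\Delta}\sbullet E^\beta)$, uniqueness, and the ``Moreover'' clause by checking on finite truncations. The only cosmetic difference is that the paper works with the specific exhaustive sequence $\Delta^r=\Delta\cap\{|\alpha|\le r\}$ (noting in a footnote that any such sequence would do), whereas you work directly with the full directed system of finite sub-co-ideals; the arguments are otherwise the same.
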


\begin{proof}
Let us consider the finite co-ideals\footnote{Actually, we could
consider any increasing exhaustive sequence of finite co-ideals
contained in $\Delta$.}  $\Delta^r:=\Delta \cap \{\alpha\in \N^q \ |
\ |\alpha|\leq r\}$. We have $\Delta^r\subseteq \Delta^{r+1}$ for
all $r\geq 1$ and $\Delta=\bigcup_r \Delta^r$. Moreover, if
$\nabla'\subseteq \nabla$ are two non-empty co-ideals, for all
$\beta\in \C^q_\nabla$, the substitution map $\tau_{\nabla\nabla'}
\pcirc \psi_{\beta,\nabla}:
 A\llbracket\mu\rrbracket_{m_\beta^\nabla} \ni \mu \to s_1^{\beta_1}\cdots
s_q^{\beta_q}\in A\llbracket s_1,\ldots,s_q\rrbracket_{\nabla'}$ is
\begin{equation}\label{truncacion psi}
\tau_{\nabla\nabla'}\pcirc
\psi_{\beta,\nabla}=\left\{\begin{array}{ll}
0&\mbox{ if } \beta\not\in \C^q_{\nabla'}\\
\psi_{\beta,\nabla'} \pcirc \tau_{m_\beta^\nabla m_\beta^{\nabla'}}
& \mbox{ if } \beta\in \C^q_{\nabla'}.
\end{array}\right.
\end{equation}

We denote $D^r:= \tau_{\Delta\Delta^r}(D)\in \HS_k^q(A;\Delta^r)$,
$\C^q_r:=\C^q_{\Delta^r}=\{\beta^{1,(r)} \prec \beta^{2,(r)}
\prec\cdots \prec \beta^{C_r,(r)}\}$ and
$m^{(r)}_{i}=m^{\Delta^r}_{\beta^{i,(r)}}$. It is clear that
$\C^q_r\subseteq \C^q_{r+1}$ for all $r\geq 1$. Moreover, for all
$\beta\in \C^q$, there exists $b_\beta\geq 1$ such that $\beta\in
\Delta^r$ for all $r\geq b_\beta$ and $\beta\not\in
\Delta^{b_\beta-1}$. Hence, we have that
$\beta=\beta^{i_{r,\beta},(r)}$ for all $r\geq b_\beta$ and the
chain
$$m^{(b_{\beta})}_{i_{b_\beta,\beta}}\leq \cdots \leq  m^{(r)}_{i_{r,\beta}} \leq m^{(r+1)}_{i_{r+1,\beta}}\leq \cdots \leq m^\Delta_\beta.
$$
Observe that if $m^\Delta_\beta<\infty$, then there exists $n\geq
b_\beta$ such that $m^{(n)}_{i_{n,\beta}}=m^\Delta_\beta$. For all
$r\geq 1$, by Theorem \ref{Descomposicion}, there exists a unique
family $E^{j,(r)}\in \HS_k(A;m^{(r)}_j)$ such that
$$
D^r=\left(\psi_{\beta^{1,(r)},\Delta^r} \sbullet
E^{1,(r)}\right)\pcirc \cdots \pcirc
\left(\psi_{\beta^{C_r,(r)},\Delta^r} \sbullet E^{C_r,(r)}\right).
$$
Since $\tau_{\Delta^{r+1}\Delta^r}(D^{r+1})=D^r$ and
(\ref{truncacion psi}), we have that
$$
\begin{array}{rll}
D^r=&\left(\left(\tau_{\Delta^{r+1}\Delta^r} \pcirc\psi_{\beta^{1,(r+1)},\Delta^{r+1}}\right) \sbullet E^{1,(r+1)}\right)\pcirc \cdots \pcirc \left(\left(\tau_{\Delta^{r+1}\Delta^r} \pcirc\psi_{\beta^{C_{r+1},(r+1)},\Delta^{r+1}} \right)\sbullet E^{C_{r+1},(r+1)}\right)\\
=&\left(\psi_{\beta^{1,(r)},\Delta^r} \sbullet F^1\right) \pcirc
\cdots \pcirc \left(\psi_{\beta^{C_r,(r),\Delta^r}} \sbullet
F^{C_r}\right),
\end{array}
$$
with $F^j:=\tau_{m^{(r+1)}_{i_{r+1,\beta}}
m^{(r)}_{j}} ( E^{i_{r+1,\beta},(r+1)})$ for $\beta=\beta^{j,(r)}=\beta^{i_{r+1,\beta},(r+1)}\in
\C^q_r$.
Thanks to the uniqueness,
we obtain that $F^j=E^{j,(r)}$ for all $j$. Hence, for all $\beta\in
\C^q_{\Delta}$, we have a set $\{E^{i_{r,\beta},(r)}\in
\HS_k(A;m^{(r)}_{i_{r,\beta}})\}_{r\geq b_\beta}$ such that
$\tau_{m^{(r+1)}_{i_{r+1,\beta}}m^{(r)}_{i_{r,\beta}} }
(E^{i_{r+1,\beta},(r+1)})=E^{i_{r,\beta},(r)}$. Then, we define
$$
E^\beta=\lim_{\stackrel{\longleftarrow} {\substack{r\geq b_\beta}}}
E^{i_{r,\beta},(r)}\in \HS_k(A;m^\Delta_\beta).
$$
The family $\{\psi_{\beta,\Delta} \sbullet E^\beta\}_{\beta\in
\C^q_\Delta}$ is composable since,  for any finite non-empty
co-ideal $\nabla\subseteq \Delta$, the set $\C^q_\nabla$ is finite
and, thanks to (\ref{truncacion psi}), $(\tau_{\Delta\nabla} \pcirc
\psi_{\beta,\Delta} ) \sbullet E^\beta=\II$ for all $\beta\not\in
\C^q_\nabla$. To prove that $D= \circ_{\beta\in \C^q_\Delta}
\left(\psi_{\beta,\Delta}\sbullet E^\beta\right)$, we have to see
that, for all finite co-ideal $\nabla\subseteq \Delta$,
$$
\tau_{\Delta\nabla}(D)=\tau_{\Delta\nabla} \left(\circ_{\beta\in
\C^q_\Delta} (\psi_{\beta,\Delta} \sbullet
E^\beta)\right)=\circ_{\beta\in \C^q_\Delta} \left(
\left(\tau_{\Delta\nabla} \pcirc \psi_{\beta,\Delta}\right) \sbullet
E^\beta\right)=\circ_{\beta\in \C^q_\nabla}\left(
\left(\psi_{\beta,\nabla}\pcirc \tau_{m_\beta^\Delta
m_\beta^\nabla}\right) \sbullet E^\beta\right) .$$ So, let us
consider a finite co-ideal $\nabla\subseteq \Delta$. Then, there
exists $r\geq 1$ such that $\nabla \subseteq \Delta^r$ and
$\tau_{\Delta\nabla}(D)=\tau_{\Delta^r\nabla}(D^r)$. Thanks to
(\ref{truncacion psi}), we have
$$
\tau_{\Delta\nabla}(D)=\left(\tau_{\Delta^r\nabla} \pcirc
\psi_{\beta^{1,(r)},\Delta^r} \sbullet E^{1,(r)}\right) \pcirc
\cdots \pcirc \left(\tau_{\Delta^r\nabla} \pcirc
\psi_{\beta^{C_r,(r)},\Delta^r} \sbullet
E^{C_r,(r)}\right)=\circ_{\beta\in \C^q_\nabla} \left(
\psi_{\beta,\nabla} \sbullet G^\beta\right),
$$
where $G^\beta:=\tau_{m^{(r)}_{i_{r,\beta}}
m^\nabla_\beta}(E^{i_{r,\beta},(r)})=\tau_{m^\Delta_\beta
m^\nabla_\beta}(E^\beta)$ for all $\beta=\beta^{i_{r,\beta},(r)}\in
\C^q_\nabla \subseteq\C^q_r$. Hence, we have the equality.

The family $E^\beta$, ${\beta\in \C^q_\Delta}$, is unique: let
$H^\beta\in\HS_k(A;m_\beta^\Delta)$, ${\beta\in \C^q_\Delta}$, be
another family such that $D=\circ_{\beta\in \C^q_\Delta} \left(
\psi_{\beta,\Delta}\sbullet H^\beta\right)$. From (\ref{truncacion
psi}),
$$D^r=\tau_{\Delta\Delta^r}\left( \circ_{\beta\in \C^q_\Delta} \psi_{\beta,\Delta} \sbullet E^\beta\right)
=\circ_{\beta\in \C^q_{\Delta^r}}
\left(\psi_{\beta,\Delta^r}\sbullet \left(\tau_{m_\beta^\Delta
m_\beta^{\Delta^r}}\left(E^\beta\right)\right)\right).
$$
and doing a similar computation, $D^r=\circ_{\beta\in
\C^q_{\Delta^r}} \left(\psi_{\beta,\Delta^r}\sbullet
\left(\tau_{m_\beta^\Delta
m_\beta^{\Delta^r}}(H^\beta)\right)\right)$. From the uniqueness of
Theorem \ref{Descomposicion}, we deduce that, for all $r\geq
b_\beta$, $\tau_{m_\beta^\Delta
m_\beta^{\Delta^r}}(E^\beta)=\tau_{m_\beta^\Delta
m_\beta^{\Delta^r}}(H^\beta)$ and so $E^\beta=H^\beta$.
\medskip

Let us assume now that $\alpha\in P^\Delta_\beta$ for some $\beta\in
\C^q_\Delta$ such that $D_\gamma=0$ for all $\gamma\in
\s^\Delta_{\beta} $ with $\gamma\leq \alpha$. Let us consider $r\geq
b_\beta$ such that $\gcd(\alpha_1,\ldots,\alpha_q)\leq
m_{i_{r,\beta}}^{(r)}$ (for example, $r=|\alpha|$). Then,
$D^r_\gamma=0$ for all $\gamma\in \s_\beta^\Delta \cap
\Delta^r=\s_\beta^{\Delta^r}$ with $\gamma\leq \alpha$. Then, since
$\tau_{m^{\Delta}_{\beta}m^{(r)}_{i_{r,\beta}}}(E^\beta)=
E^{i_{r,\beta},(r)}\in \HS_k^q(A;\Delta^r)$, by Theorem
\ref{Descomposicion},
$E^{\beta}_n=E^{i_{r,\beta},(r)}_n=D^r_{n\beta}=D_{n\beta}$ for all
$n=0,\ldots, \gcd(\alpha_1,\ldots,\alpha_q)$.
\end{proof}

\begin{cor}
Let $k$ be a ring of positive prime characteristic $p>0$, $\Delta$ a co-ideal,
$\alpha\in \C^q_{\Delta}$ and $d,s\geq 1$ such that $dp^s\alpha\in
\Delta$. Let us consider $D\in \HS_k^q(A;\Delta)$ such that
$D_\gamma=0$ for all $\gamma\in \s^\Delta_{\alpha}$ with $\gamma\leq
d\alpha$. If $D_{r\alpha}=0$ for all $r=1,\ldots, d-1$ then,
$D_{d\alpha}$ is a $p^s$-integrable derivation.
\end{cor}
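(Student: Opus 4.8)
The plan is to peel off from $D$ a genuine uni-variate HS-derivation along the ray $\N_+\alpha$ whose low-order components coincide with the $D_{n\alpha}$, and then to feed that uni-variate object into the integrability criterion of Proposition \ref{Prop 3.14}. The mechanism that produces such a uni-variate factor is precisely the infinite decomposition of Corollary \ref{Descomposicion infinita}.

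First I would apply Corollary \ref{Descomposicion infinita} to $D$, obtaining the unique family $E^\beta\in \HS_k(A;m^\Delta_\beta)$, $\beta\in \C^q_\Delta$, with $D=\circ_{\beta\in \C^q_\Delta}\left(\psi_{\beta,\Delta}\sbullet E^\beta\right)$, and I would single out the factor indexed by our given $\beta=\alpha$. To read off its components I would invoke the ``moreover'' clause of that corollary applied to the element $d\alpha\in P^\Delta_\alpha$; note $d\alpha\in\Delta$ because $d\alpha\leq dp^s\alpha\in\Delta$ and $\Delta$ is a co-ideal. The standing hypothesis ``$D_\gamma=0$ for all $\gamma\in\s^\Delta_\alpha$ with $\gamma\leq d\alpha$'' is exactly the off-ray vanishing required by that clause, so it yields $E^\alpha_n=D_{n\alpha}$ for $n=0,\ldots,\gcd\bigl((d\alpha)_1,\ldots,(d\alpha)_q\bigr)$. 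Since $\alpha\in\C^q_\Delta$ forces $\gcd(\alpha_1,\ldots,\alpha_q)=1$, this range is $n=0,\ldots,d$; combined with the on-ray hypothesis $D_{r\alpha}=0$ for $1\leq r\leq d-1$, I would record that $E^\alpha_0=\Id$, that $E^\alpha_1=\cdots=E^\alpha_{d-1}=0$, and that $E^\alpha_d=D_{d\alpha}$.

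Next I would observe that $dp^s\in M^\Delta_\alpha$ (since $dp^s\alpha\in\Delta$), hence $m^\Delta_\alpha\geq dp^s$, so that the truncation $\widetilde E:=\tau_{m^\Delta_\alpha\,dp^s}(E^\alpha)\in\HS_k(A;dp^s)$ is defined. Because truncations are group homomorphisms, $\widetilde E$ is again a bona fide uni-variate HS-derivation, now of length exactly $dp^s$, with $\widetilde E_1=\cdots=\widetilde E_{d-1}=0$ and $\widetilde E_d=D_{d\alpha}$; I do not need to compute its higher components, only to know they exist. Finally I would apply Proposition \ref{Prop 3.14} with $e=d$ to $\widetilde E$, concluding $D_{d\alpha}=\widetilde E_d\in\IDer_k(A;p^s)$, which is the assertion.

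The memberships $d\alpha\in\Delta$ and $dp^s\in M^\Delta_\alpha$ and the gcd computation $\gcd\bigl((d\alpha)_i\bigr)=d$ are routine bookkeeping. The one load-bearing point, which I would treat as the crux, is verifying that the ``moreover'' clause of Corollary \ref{Descomposicion infinita} applies verbatim: that clause pins down $E^\alpha_n$ only up to the index $\gcd((d\alpha)_i)$, and it does so precisely because of the \emph{off-ray} vanishing of $D$ below $d\alpha$, which guarantees that the successive substitution factors of the decomposition leave the diagonal components $D_{n\alpha}$ ($n\leq d$) unperturbed. Matching the given hypothesis to this clause is therefore the step that makes the whole reduction legitimate; once it is in place, everything reduces to the uni-variate statement of Proposition \ref{Prop 3.14}.
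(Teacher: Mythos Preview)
Your proposal is correct and follows essentially the same approach as the paper: apply the ``moreover'' clause of Corollary~\ref{Descomposicion infinita} at $d\alpha\in P^\Delta_\alpha$ to obtain $E^\alpha_n=D_{n\alpha}$ for $n\leq d$, truncate $E^\alpha$ to length $dp^s$, and invoke Proposition~\ref{Prop 3.14}. Your write-up is more explicit about the bookkeeping (why $d\alpha\in\Delta$, why $\gcd((d\alpha)_i)=d$, why $m^\Delta_\alpha\geq dp^s$), but the structure and the key tools are identical.
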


\begin{proof}
By Corollary \ref{Descomposicion infinita}, there exists
$E^{\alpha}\in \HS_k(A;m_{\alpha}^\Delta)$ such that
$E_r^{\alpha}=D_{r\alpha}$ for all $r=1,\ldots,
\gcd(d\alpha_1,\ldots,d\alpha_q)=d$. Since $dp^s\leq
m^\Delta_{\alpha}$, we can consider $E=\tau_{m^\Delta_\alpha \
dp^s}(E^\alpha)\in \HS_k(A;dp^s)$ such that $E_r=0$ for all
$r=1,\ldots,d-1$ and $E_d=D_{d\alpha}$. By Proposition \ref{Prop 3.14}, we can deduce that $D_{d\alpha}$ is a $p^s$-integrable
derivation.
\end{proof}

Let us recall that a {\it Lie-Rinehart algebra} $L$ over $A/k$ (see
\cite{rine_63}) is a left $A$-module and a $k$-Lie algebra endowed
with an ``anchor'' map $ \varrho: L \longrightarrow \Der_k(A)$ which
is $A$-linear, a map of $k$-Lie algebras and the following
compatibility holds:
$$
[\lambda, a\lambda'] = a [\lambda, \lambda'] + \varrho(\lambda)(a)
\lambda',\ \forall \lambda, \lambda'\in L, \forall a \in A.
$$
We usually write $\lambda(a)$ for $\varrho(\lambda)(a)$. Moreover,
if $k$ has positive prime characteristic $p>0$, a Lie-Rinehart algebra $L$
is called {\it restricted} if $L$ is a restricted Lie algebra (see \cite[Chap. V, \S 7]{jac_79}) such that
$$
(a\lambda)^{[p]}=a^p\lambda^{[p]}+(a\lambda)^{p-1}(a)\lambda \ \
\forall \lambda \in L, \ \forall a\in A
$$
(see \cite{rum} for more information about restricted Lie-Rinehart ($\equiv$ Lie algebroids)).
\medskip

Thanks to Corollary \ref{El conmutador es integrable},
modules $\IDer_k(A;m)$, $m\in \mathbb N\cup \{\infty\}$, and $\IDer_k^f(A)$
will be Lie-Rinehart algebras, the anchor maps being the inclusions in
$\Der_k(A)$. Moreover, if $k$ has positive prime characteristic and $m$ is
a positive integer, $\IDer_k(A;m)$ and $\IDer_k^f(A)$ will be restricted
by Proposition \ref{Elevar a p}. 

\begin{cor}\label{El conmutador es integrable}
Let $\delta,\varepsilon\in \IDer_k(A;m)$ be $m$-integrable
derivations, for $m\in \N\cup \{\infty\}$. Then the bracket
$[\delta,\varepsilon]=\delta \varepsilon -\varepsilon\delta$ is also
$m$-integrable.
\end{cor}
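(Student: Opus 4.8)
The plan is to follow exactly the length-$2$ argument sketched in the introduction, but now in full generality using the decomposition machinery built in Section 3. The idea is to take two $m$-integrals $D,E\in\HS_k(A;m)$ of $\delta,\varepsilon$, form their external product as a bi-variate ($q=2$) Hasse--Schmidt derivation, pass to the group commutator, and then extract a uni-variate HS-derivation supported on the diagonal whose $1$-component is exactly $[\delta,\varepsilon]$. The combinatorial Propositions of Section 2 together with Corollary \ref{Descomposicion infinita} are what turn this heuristic into a proof for arbitrary $m$, including $m=\infty$.

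More precisely, I would first set $q=2$ and let $\Delta=\{\gamma\in\N^2\mid \gamma\le(m,m)\}$ (or $\Delta=\N^2$ when $m=\infty$). I would form the $2$-variate HS-derivations $D\boxtimes E$ and $D^\ast\boxtimes E^\ast$ in $\HS_k^2(A;\Delta)$ and set
\[
G=(D\boxtimes E)\pcirc(D^\ast\boxtimes E^\ast)\in\HS_k^2(A;\Delta).
\]
The decisive computation is that $G$ vanishes on the two coordinate axes, i.e. $G_{(i,0)}=0$ and $G_{(0,j)}=0$ for all $i,j\ge 1$. This follows because the $\boxtimes$-construction makes $G$ factor coordinatewise: restricting to the first axis gives $D\pcirc D^\ast=\II$, and restricting to the second gives $E\pcirc E^\ast=\II$. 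In particular $G_{(1,0)}=G_{(0,1)}=0$, so $G$ has no support in the lowest classes, and $G_{(1,1)}=[D_1,E_1]=[\delta,\varepsilon]$ by the standard commutator formula for the group structure.

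Now I would apply the decomposition of Corollary \ref{Descomposicion infinita} to $G$, focusing on the diagonal class $\beta=(1,1)\in\C^2_\Delta$. Since $G_\gamma=0$ for all $\gamma\in\s^\Delta_{(1,1)}$ (these are exactly the indices with $[\gamma]\prec[(1,1)]$, which all lie strictly below the diagonal and are covered by the vanishing on and beneath the axes together with Proposition \ref{Desigualdad 1}), the ``moreover'' clause of Corollary \ref{Descomposicion infinita} applies with $\alpha=(1,1)$: it yields $E^{(1,1)}\in\HS_k(A;m^\Delta_{(1,1)})$ with $E^{(1,1)}_n=G_{n(1,1)}=G_{(n,n)}$ for the relevant range of $n$. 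Since $m^\Delta_{(1,1)}=m$ and $E^{(1,1)}_1=G_{(1,1)}=[\delta,\varepsilon]$, this uni-variate HS-derivation of length $m$ is precisely an $m$-integral of $[\delta,\varepsilon]$, so $[\delta,\varepsilon]\in\IDer_k(A;m)$.

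The \emph{main obstacle} I anticipate is the verification that $G$ really vanishes on the entire region $\s^\Delta_{(1,1)}$, not merely on the two axes. The clean vanishing $D\pcirc D^\ast=\II$ only controls the axes directly; to get vanishing on all of $\s^\Delta_{(1,1)}$ one must feed the axis-vanishing into Proposition \ref{Desigualdad 1}, which guarantees that in any decomposition $\lambda+\sigma=\gamma$ with $[\gamma]\prec[(1,1)]$ one of the summands has a strictly smaller class still lying below the diagonal — this is exactly the inductive book-keeping carried out inside the proof of Proposition \ref{Primer paso descomposicion}. I would therefore be careful to phrase the argument so that it is the hypothesis ``$D_\gamma=0$ on $\s^\Delta_\beta$'' of Proposition \ref{Primer paso descomposicion} that is being checked, and then simply invoke the already-proven machinery rather than redoing the combinatorics. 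The case $m=\infty$ requires no extra work since Corollary \ref{Descomposicion infinita} is stated for arbitrary (possibly infinite) co-ideals.
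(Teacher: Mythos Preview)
Your approach is exactly the one in the paper, but you misread the hypothesis of Corollary \ref{Descomposicion infinita} and thereby manufacture an obstacle that is both unnecessary and insurmountable. The ``moreover'' clause does \emph{not} require $G_\gamma=0$ for all $\gamma\in\s^\Delta_{(1,1)}$; it only requires $G_\gamma=0$ for those $\gamma\in\s^\Delta_{(1,1)}$ with $\gamma\le\alpha$. Taking $\alpha=(1,1)$, the set $\s^\Delta_{(1,1)}\cap\{\gamma\le(1,1)\}$ is the singleton $\{(0,1)\}$, so the sole verification needed is $G_{(0,1)}=0$, which you already have from $E\pcirc E^\ast=\II$. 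The corollary then gives $E^{(1,1)}\in\HS_k(A;m)$ with $E^{(1,1)}_n=G_{(n,n)}$ for $n=0,\dots,\gcd(1,1)=1$, hence $E^{(1,1)}_1=G_{(1,1)}=[\delta,\varepsilon]$, and you are done. This is precisely the paper's proof.

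The stronger claim you make---that $G_\gamma=0$ for \emph{all} $\gamma\in\s^\Delta_{(1,1)}$---is in fact false in general. For instance $G_{(1,2)}=\sum_{i+j=2}E_iD_1^\ast E_j^\ast$ (cf.\ the formula for $F_{(1,r)}$ in the paper's example) has no reason to vanish. Your proposed inductive argument via Proposition \ref{Desigualdad 1} cannot succeed: knowing that one summand in a decomposition $\lambda+\sigma=\gamma$ has class $\prec[(1,1)]$ does not force the corresponding factor $(D\boxtimes E)_\lambda$ or $(D^\ast\boxtimes E^\ast)_\sigma$ to vanish, since $D\boxtimes E$ and $D^\ast\boxtimes E^\ast$ are supported on the whole of $\Delta$, not merely on the axes. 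So drop that paragraph entirely; the single check $G_{(0,1)}=0$ suffices.
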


\begin{proof}
Let us consider $D,E\in \HS_k(A;m)$ $m$-integrals of $\delta,\varepsilon$ respectively and let us denote
$$
F:=(D\boxtimes E)\pcirc (D^\ast \boxtimes E^\ast)\in
\HS^2_k(A;\Delta),
$$
where $\Delta=\{\beta\in \N^2\ | \ \beta\leq (m,m)\}$ if $m\in \N$
and $\Delta=\N^2$ if $m=\infty$. We have that $F_{(0,1)}=0$ and,
since $E^\ast_1=-E_1=-\varepsilon$ and $D^\ast_1=-D_1=-\delta$, we
get
$$
F_{(1,1)}=D_1E_1+D_1E_1^\ast+E_1D_1^\ast+D_1^\ast
E_1^\ast=[D_1,E_1]=[\delta,\varepsilon].
$$
Let us consider $\alpha=(1,1)\in \C^2_\Delta$. Then, $m^\Delta_{\alpha}=m$ and
$\s_{\alpha}^{\Delta} \cap \{ \lambda\in \N^q_+\ |\ \lambda \leq
\alpha\}=\{(0,1)\}$. By Corollary \ref{Descomposicion infinita},
there exists $E^\alpha\in \HS_k(A;m)$ such that
$E^{\alpha}_1=F_{(1,1)}=[\delta,\varepsilon]$, and so $[\delta,\varepsilon]$ is an
$m$-integrable derivation.
\end{proof}

\begin{prop}\label{Elevar a p}
Let  $k$ be a ring of positive prime characteristic $p>0$ and $m\in \mathbb N$. If
$\delta\in \IDer_k(A;m)$, then $\delta^p\in \IDer_k(A;m)$, and so $\IDer_k(A;m)$ is a restricted Lie-Rinehart algebra.
\end{prop}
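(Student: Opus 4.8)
The plan is to realize $\delta^p$ — which is again a $k$-derivation, since the $p$-th power of a derivation in characteristic $p$ is a derivation (Jacobson, \cite{jac_79}) — as the degree-one coefficient of a uni-variate HS-derivation of length $m$, exactly as in the proof of Corollary \ref{El conmutador es integrable}. First I would fix an $m$-integral $D\in\HS_k(A;m)$ of $\delta$ and form the external product $D^{\boxtimes p}\in\HS^p_k(A;\Delta)$, where $\Delta=\{\gamma\in\N^p : \gamma\leq(m,\dots,m)\}$ (or $\Delta=\N^p$ if $m=\infty$), whose $(1,\dots,1)$-coefficient is already $D_1^{p}=\delta^p$. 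The point of working with $p$ variables is that for $\alpha=(1,\dots,1)\in\C^p_\Delta$ one has $m^\Delta_\alpha=m$, so no length is lost; this is precisely what forces the multi-variate approach, since the naive uni-variate $p$-th power $D^{\pcirc p}$ places $\delta^p$ in degree $p$ and, via Proposition \ref{Prop 3.14}, only yields $p^s$-integrability for $p^s\leq m/p$.

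The difficulty is that $D^{\boxtimes p}$ does not satisfy the vanishing hypothesis needed to extract $\delta^p$: its coefficients at the multi-indices $\gamma\le\alpha$, $\gamma\in\{0,1\}^p$, equal $\delta^{|\gamma|}\neq0$. To correct this I would replace $D^{\boxtimes p}$ by $F:=\varphi\sbullet D^{\boxtimes p}$ for a suitable triangular substitution $\varphi$ with constant coefficients (so that $\varphi\sbullet(-)$ respects composition). For $p=2$ the substitution $\varphi(s_1)=s_1+s_2,\ \varphi(s_2)=s_2$ already works: its restriction to $s_1=0$ turns the $s_2$-axis of $F$ into the power $D^{\pcirc 2}$, so that $F_{(0,1)}=(D^{\pcirc 2})_1=2\delta=0$ in characteristic $2$, while a direct expansion gives $F_{(1,1)}=\delta^2$ (the spurious contribution $2D_2$ again vanishing). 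For general $p$ one needs $\varphi$ chosen so that each coefficient $F_\gamma$ with $\gamma\in\{0,1\}^p$, $[\gamma]\prec[\alpha]$, becomes a sum of $\delta$-monomials of multiplicity divisible by $p$ (morally, so that the relevant slices of $F$ acquire the structure of $D^{\pcirc p}$, whose coefficients in degrees $1,\dots,p-1$ vanish), while the full diagonal coefficient stays $F_{(1,\dots,1)}=\delta^p$.

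Granting this construction, I would finish exactly as in Corollary \ref{El conmutador es integrable}: since $\alpha=(1,\dots,1)\in\C^p_\Delta$, $m^\Delta_\alpha=m$, and the set $\s^\Delta_\alpha\cap\{\gamma\le\alpha\}$ consists of the multi-indices $\gamma\in\{0,1\}^p$, $\gamma\neq0$, with $[\gamma]\prec[\alpha]$, all of which $F$ now annihilates, Corollary \ref{Descomposicion infinita} (its ``moreover'' part, applied with $\beta=(1,\dots,1)$ and target $(1,\dots,1)$, whose $\gcd$ is $1$) produces $E^\alpha\in\HS_k(A;m)$ with $E^\alpha_1=F_{(1,\dots,1)}=\delta^p$. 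Hence $\delta^p\in\IDer_k(A;m)$. Finally, for the restricted Lie--Rinehart assertion I would combine Corollary \ref{El conmutador es integrable} (closure under the bracket) with the $p$-th-power closure just proved, and check the remaining identity $(a\delta)^{[p]}=a^p\delta^{[p]}+(a\delta)^{p-1}(a)\delta$ with $\delta^{[p]}:=\delta^p$; this is the classical Hochschild--Jacobson formula for $p$-th powers of derivations, valid already in $\Der_k(A)$, and every one of its terms lies in the $A$-submodule $\IDer_k(A;m)$ (using that $a\delta\in\IDer_k(A;m)$ and hence $(a\delta)^p\in\IDer_k(A;m)$).

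The main obstacle is the explicit choice of $\varphi$ and the verification, for general $p$, of the two simultaneous requirements: that $F_{(1,\dots,1)}=\delta^p$ and that every coefficient of $F$ at an index $\gamma\in\{0,1\}^p$ with $[\gamma]\prec[\alpha]$ vanishes. Both reduce to characteristic-$p$ cancellations among multinomial coefficients — the analogue, for the sheared power, of the identities $\binom{p}{r}\equiv0$ that make $D^{\pcirc p}$ vanish in degrees $1,\dots,p-1$ — and organizing these cancellations uniformly in $p$, rather than checking them index by index as one can do by hand for $p=2,3$, is the delicate part of the argument.
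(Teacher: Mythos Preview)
Your proposal has a genuine gap: the substitution $\varphi$ is never constructed for general $p$, and you flag this yourself as ``the delicate part''. For $\alpha=(1,\dots,1)$ one checks from the recursive definition of $\prec^q$ that $\s^\Delta_\alpha\cap\{\gamma\le\alpha\}=\{\gamma\in\{0,1\}^p:\gamma_1=0,\ \gamma\neq 0\}$, so you must force $2^{p-1}-1$ coefficients of $F$ to vanish while keeping $F_{(1,\dots,1)}=\delta^p$. Your triangular $\varphi$ for $p=2$ does this, but the obvious extensions already fail for $p=3$ (for instance $\varphi(s_i)=s_i+\cdots+s_p$ gives $F_{(0,1,0)}=2\delta\neq0$ in characteristic~$3$), and it is not at all clear that any linear $\varphi$ meets all constraints for large $p$.

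The paper's proof takes a completely different and much shorter route --- precisely the uni-variate approach you dismissed. Your objection was that $E=D^{\pcirc p}$ places $\delta^p$ in degree $p$, so Proposition~\ref{Prop 3.14} only yields $\delta^p\in\IDer_k(A;p^s)$ with $p^{s+1}\le m$. The paper neutralizes this loss via the \emph{leap structure} of \cite{Ti1}: $\IDer_k(A;m)=\IDer_k(A;p^\alpha)=\IDer_k(A;p^{\alpha+1}-1)$ for $\alpha=\max\{\tau:p^\tau\le m\}$. So one takes a $(p^{\alpha+1}-1)$-integral $D$ of $\delta$, forms $E=D^{\pcirc p}\in\HS_k(A;p^{\alpha+1}-1)$, and observes that in the would-be formula for $E_{p^{\alpha+1}}$ the term involving $D_{p^{\alpha+1}}$ carries coefficient $p=0$; hence one simply defines $E_{p^{\alpha+1}}:=\sum_{|i|=p^{\alpha+1},\ i_j<p^{\alpha+1}}D_{i_1}\cdots D_{i_p}$ and checks directly that the extended $E$ lies in $\HS_k(A;p^{\alpha+1})$. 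Since $E_1=\cdots=E_{p-1}=0$ (the multinomials $\binom{p}{k}$ vanish) and $E_p=\delta^p$, Proposition~\ref{Prop 3.14} with $e=p$, $s=\alpha$ gives $\delta^p\in\IDer_k(A;p^\alpha)=\IDer_k(A;m)$. No multivariate machinery is used. The price is that this argument genuinely needs $m<\infty$ --- the paper leaves $m=\infty$ open --- whereas your strategy, \emph{if} the substitution could be built, would apparently cover $m=\infty$ as well.
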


\begin{proof}
By Theorem 4.1 from \cite{Ti1}, we only have to prove the result for
powers of $p$ since $\IDer_k(A;m)=\IDer_k(A;p^\alpha)$ for
$\alpha=\max \{\tau \in \mathbb N_+\ | \ p^\tau\leq m\}$. So, let us
consider $\delta \in \IDer_k(A;p^\alpha)=\IDer_k(p^{\alpha+1}-1)$
for some $\alpha \geq 1$ and $D\in \HS_k(A; p^{\alpha+1}-1)$ a
$(p^{\alpha+1}-1)$-integral of $\delta$, and denote $E=D^p\in \HS_k(A;p^{\alpha+1}-1)$.
\medskip

The $n$-component of $E$, for $1\leq n < p^{\alpha+1}$, is $E_n=\sum_{|i|=n} D_{i_1} \pcirc D_{i_2}
\pcirc \cdots \pcirc D_{i_p}$, with $i\in\mathbb{N}^p$ and $|i|=i_1+\cdots+i_p$. We have:
\begin{equation} \label{eq:En}
E_n =\cdots = \sum_{\substack{H\subset \{1,\dots,p\}\\ H\neq \emptyset}} \sum_{\supp(i)=H} D_{i_1} \pcirc D_{i_2}
\pcirc \cdots \pcirc D_{i_p} = \sum_{k=1}^p \sum_{\substack {a_1+\cdots+a_k=n\\ a_j > 0}}
\binom{p}{k} D_{a_1}\pcirc \cdots \pcirc  D_{a_k},
\end{equation}
with $\supp(i) = \{j\in \{1,\dots,p\}\ |\ i_j\neq 0\}$. 
If $D$ was $p^{\alpha+1}$-integrable, i.e. if it had an extension up to a Hasse--Schmidt derivation of length $p^{\alpha+1}$, that we also call $D$, the expression \eqref{eq:En} would hold for $n=p^{\alpha+1}$, but
\begin{equation} \label{eq:Epa+1}
E_{p^{\alpha+1}} = \cdots = pD_{p^{\alpha+1}}+\sum_{\substack{|i|=p^{\alpha+1}\\ i_j< p^{\alpha+1}}}
D_{i_1}  \pcirc \cdots \pcirc D_{i_p}
 =\sum_{\substack{|i|=p^{\alpha+1}\\ i_j< p^{\alpha+1}}} D_{i_1}
\pcirc \cdots \pcirc D_{i_p}
\end{equation}
would not depend on $D_{p^{\alpha+1}}$. With this idea in mind, we define $E_{p^{\alpha+1}}$ as in equation \eqref{eq:Epa+1} and we can prove directly that the resulting sequence $(\Id,E_1,\dots,E_{p^{\alpha+1}-1},E_{p^{\alpha+1}})$ is a Hasse--Schmidt derivation of length $p^{\alpha+1}$.

Now, from equation \eqref{eq:En} we deduce that $E_n=0$ for all $1\leq n<p$ and
$E_p=D_1^p=\delta^p$, and by Proposition \ref{Prop 3.14}, we conclude that $\delta^p\in
\IDer_k(A;p^\alpha)$.
\end{proof}

\begin{nota} An obvious consequence of Proposition \ref{Elevar a p} is that $\delta^p\in \IDer^f_k(A)$ whenever $\delta\in \IDer^f_k(A)$, and so $\IDer^f_k(A)$ is a restricted Lie-Rinehart algebra. However, we do not know whether the same result holds for $\IDer_k(A;\infty)$ instead of $\IDer^f_k(A)$.
\end{nota}

\begin{ej}
Let us consider $\delta,\varepsilon\in \IDer_k(A;4)$ and $D,E\in
\HS_k(A;4)$ a $4$-integral of $\delta$ and $\varepsilon$
respectively. Then, we define $F=(D \boxtimes E) \pcirc (D^\ast
\boxtimes E^\ast)$. Following the steps of the proof we get that a
4-integral of $[\delta,\varepsilon]$ is
$(\Id,[\delta,\varepsilon],H_2,H_3,H_4)\in \HS_k(A;4)$:
$$
H_2= F_{(2,2)}=D_2E_2+D_1E_2D_1^\ast+E_2D_2^\ast+
(D_2E_1+D_1E_1D_1^\ast+E_1D_2^\ast)E_1^\ast,
$$
\begin{eqnarray*}
& \displaystyle H_3=F_{(3,3)}-F_{(1,2)}F_{(2,1)} =
\\
& \displaystyle=\sum_{i+j=3} E_i D_3^\ast E^\ast_j
+D_2\left(\sum_{i+j=3} E_iD_1^\ast
E_j^\ast\right)+D_1\left(\sum_{i+j=3} E_iD_2^\ast E_j^\ast\right) -
\left( \sum_{i+j=2} E_iD_1^\ast E_j\right) \left(\sum_{i+j=2}
D_iE_1D_j^\ast\right)
\end{eqnarray*}
and
$H_4=F_{(4,4)}-F_{(1,3)}F_{(3,1)}-F_{(1,2)}F_{(3,2)}-F_{(2,3)}F_{(2,1)}+F_{(1,2)}[\delta,\varepsilon]F_{(2,1)}
$ with
$$
F_{(4,4)}=\sum_{r=1}^4\left(\sum_{i+j=4}
D_iE_rD^\ast_j\right)E^\ast_{4-r}, \  \ F_{(1,r)}=\sum_{i+j=r} E_i
D_1^\ast E_j, \ \ F_{(r,1)}=\sum_{i+j=r} D_iE_1D_j^\ast
$$
$$
F_{(2,3)}=\sum_{i+j=3} E_i D_2^\ast E^\ast_j+D_1 \left(\sum_{i+j=3}
E_iD_1^\ast E^\ast_j\right) \text{ and } \
F_{(3,2)}=\sum_{i+j=3}D_iE_2D_j^\ast
+\left(\sum_{i+j=3}D_iE_1D_j^\ast\right)E^\ast_1.
$$
\end{ej}

\section{Poisson structures}

In \cite{nar_2009}, the first author has introduced a canonical map
of graded $A$-algebras $\vartheta^\infty: \Gamma_A \IDer_k(A;\infty)
\longrightarrow \gr \DD_{A/k}$, where $\DD_{A/k}$ is the filtered
ring of linear differential operators of $A$ over $k$ and $\Gamma_A$ denotes the {\em divided power algebra} functor. It is
determined in the following way. For each $\infty$-integrable
derivation $\delta \in \IDer_k(A;\infty) $ let us choose an integral
$D=(\Id,D_1=\delta,\dots)\in \HS_k(A;\infty)$. Then the symbol
$\sigma_n(D_n)$ does not depend on the choice of $D$ and $
\vartheta^\infty(\gamma_n(\delta)) = \sigma_n(D_n)$.
\medskip

Actually, the above construction also works if we take
$\IDer^f_k(A)$ instead of $\IDer_k(A;\infty)$ and we obtain a unique
map of graded $A$-algebras
$$
\vartheta^f: \Gamma_A \IDer^f_k(A) \longrightarrow \gr \DD_{A/k}
$$
determined in a similar way: for each $f$-integrable derivation
$\delta \in \IDer^f_k(A) $ and for each $n\geq 1$, let us choose a
$n$-integral $D=(\Id,D_1=\delta,\dots,D_n)\in \HS_k(A;n)$. Then the
symbol $\sigma_n(D_n)$ only depends on $\delta$ and  not on the
choice of $D$, and $ \vartheta^f(\gamma_n(\delta)) = \sigma_n(D_n)
$. Clearly, $\vartheta^f$ is an extension of $\vartheta^\infty$.
\medskip

On the other hand, since the ring of differential operators
$\DD_{A/k}$ is filtered with commutative graded ring, we know that
its graded ring $\gr \DD_{A/k}$ has a canonical Poisson bracket
given by (cf. \cite{gab_81}):
$$ \{\sigma_d(P),\sigma_e(Q)\} = \sigma_{d+e-1}([P,Q])
$$
for all $P\in \DD^d_{A/k}$ and all $Q\in \DD^e_{A/k}$, where
$\sigma_d: \DD^d_{A/k} \to \gr^d \DD_{A/k}$ is the $d$-symbol map.
It is an skew-symmetric $k$-biderivation and satisfies Jacobi
identity, and so $\gr \DD_{A/k}$ becomes a Poisson algebra.
Moreover, this Poisson bracket is graded of degree $-1$.
\medskip

The goal of this section is, by using the fact that $\IDer^f_k(A)$ and $\IDer_k(A;\infty)$ are Lie-Rinehart algebras (see Corollary \ref{El conmutador es integrable}), to exhibit natural Poisson algebra structures on
$\Gamma_A \IDer^f_k(A)$ and $\Gamma_A \IDer_k(A;\infty)$ in such a
way that $\vartheta^\infty$ and $\vartheta^f$ becomes maps of
Poisson algebras.
\medskip

Let us recall that, for any $A$-module $M$, its {\em
divided power algebra} $\Gamma_A M$, endowed with the power divided
maps $\gamma_n:M \to  \Gamma_A^n M$, $n\geq 0$, has been defined in
\cite[Chap. III, 1]{roby_63} (see also \cite[App. A]{bert_ogus}). It
is a graded commutative $A$-algebra $\Gamma_A M = \bigoplus_{n\geq
0} \Gamma^n_A M$, with $\Gamma^0_A M = A$, $\Gamma^1_A M = M$ and
$\Gamma^n_A M$ is generated as $A$-module by the $\gamma_n(x)$,
$x\in M$, and it has some universal property that we will not detail
here (see \cite[Th. III.1]{roby_63}). When $\Q \subset A$, then
$\Gamma_A M$ coincides with the symmetric algebra $\Sym_A M$ and
$\gamma_n(x) = \frac{x^n}{n!}$ for all $x\in M$ and all $n\geq 0$.
\medskip

First, let us see the following general result.

\begin{prop} \label{prop:poisson_Gamma}
If $L$ is a Lie-Rinehart algebra over $A/k$, then there is a unique Poisson structure
$\{-,-\}$ on $\Gamma_A L$ such that:
\begin{enumerate}
\item[(i)] $\{a,a'\}=0$ for all $a,a'\in A$.
\item[(ii)] $\{\gamma_m(\lambda),a\} = \lambda(a)\, \gamma_{m-1}(\lambda)$ for all $\lambda\in L$, all $a\in A$ and all $m\geq 1$.
\item[(iii)] $
\{\gamma_m(\lambda),\gamma_n(\lambda')\} = \gamma_{m-1}(\lambda)\,
\gamma_{n-1}(\lambda')\, \gamma_1( [\lambda,\lambda'])$ for all
$\lambda,\lambda'\in L$ and all $m,n\geq 1$.
\end{enumerate}
Moreover, $\{-,-\}$ is graded of degree $-1$.
\end{prop}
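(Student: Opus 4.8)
The plan is to treat uniqueness first, which is immediate, and then to concentrate on existence, building the bracket from a family of ``Hamiltonian'' PD-derivations and reducing the verification of the Poisson axioms to the Lie--Rinehart identities on $A\cup L$.

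For uniqueness I would use that $\Gamma_A L$ is generated, as an $A$-algebra and hence as a $k$-algebra together with $A$, by the divided powers $\gamma_m(\lambda)$, $\lambda\in L$, $m\geq 1$. Since any Poisson bracket is a biderivation, conditions (i)--(iii) together with skew-symmetry prescribe $\{-,-\}$ on every pair of generators, and a biderivation is determined by its values on a generating set; this forces the bracket and gives uniqueness. So the whole content is existence. Throughout I will call a $k$-linear derivation $\Theta$ of $\Gamma_A L$ a \emph{PD-derivation} if it is compatible with divided powers, i.e. $\Theta(\gamma_n(x))=\gamma_{n-1}(x)\,\Theta(x)$ for $x\in L$, $n\geq 1$.

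For existence I would first produce the Hamiltonians of the degree-$\leq 1$ generators. Given $\lambda\in L$, the pair formed by the anchor $\lambda\colon A\to A$ and $[\lambda,-]\colon L\to L$ satisfies $[\lambda,a\lambda']=\lambda(a)\lambda'+a[\lambda,\lambda']$, which is exactly the Lie--Rinehart compatibility needed for it to extend; by the universal property of the divided power algebra (\cite[Ch.~III]{roby_63}, \cite[App.~A]{bert_ogus}) there is a unique PD-derivation $\theta_\lambda$ of $\Gamma_A L$ with $\theta_\lambda|_A=\lambda$ and $\theta_\lambda|_L=[\lambda,-]$, so that $\theta_\lambda(\gamma_n(\mu))=\gamma_{n-1}(\mu)[\lambda,\mu]$, and $\theta_\lambda$ is degree-preserving and $k$-linear in $\lambda$. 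Likewise, for $a\in A$ let $\partial_a$ be the unique PD-derivation with $\partial_a|_A=0$ and $\partial_a(\lambda)=-\lambda(a)$; it has degree $-1$. I would then define the bracket through its first slot: $\{a,y\}:=\partial_a(y)$ and $\{\gamma_m(\lambda),y\}:=\gamma_{m-1}(\lambda)\,\theta_\lambda(y)$ on generators, extended by the Leibniz rule $\{uv,y\}=u\{v,y\}+v\{u,y\}$. The one genuine point here is well-definedness, i.e.\ compatibility with the defining relations of $\Gamma_A L$ in the first variable: the relation $\gamma_m(\lambda)\gamma_{m'}(\lambda)=\binom{m+m'}{m}\gamma_{m+m'}(\lambda)$ reduces to Pascal's identity $\binom{m+m'-1}{m}+\binom{m+m'-1}{m'}=\binom{m+m'}{m}$, while $\gamma_n(a\lambda)=a^n\gamma_n(\lambda)$ and $\gamma_n(\lambda+\mu)=\sum_{i+j=n}\gamma_i(\lambda)\gamma_j(\mu)$ follow from the identities $\theta_{a\lambda}=a\theta_\lambda+\lambda\,\partial_a$ (again the Lie--Rinehart axiom) and $\theta_{\lambda+\mu}=\theta_\lambda+\theta_\mu$. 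Granting this, $\{-,-\}$ is a biderivation satisfying (i)--(iii), and it is graded of degree $-1$ because $\theta_\lambda$ preserves degree while $\gamma_{m-1}(\lambda)$ has degree $m-1$.

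It then remains to check skew-symmetry and Jacobi, which I expect to be the main obstacle. Skew-symmetry is easy: $S(x,y)=\{x,y\}+\{y,x\}$ is a biderivation vanishing on all pairs of generators, hence identically $0$. For Jacobi, writing $H_x:=\{x,-\}$ (so $H_{\gamma_n(\mu)}=\gamma_{n-1}(\mu)\theta_\mu$), I would consider $\mathcal{J}(x,y):=H_{\{x,y\}}-[H_x,H_y]\in\Der_k(\Gamma_A L)$; then $\mathcal{J}\equiv 0$ is equivalent to the Jacobi identity. The structural key is that, for each fixed $x$, the assignment $y\mapsto\mathcal{J}(x,y)$ is itself a PD-derivation of $\Gamma_A L$ valued in the module $\Der_k(\Gamma_A L)$: a direct Leibniz computation shows it is a derivation in $y$, and the divided-power rule $\mathcal{J}(x,\gamma_n(\mu))=\gamma_{n-1}(\mu)\,\mathcal{J}(x,\mu)$ holds, both facts using only that each $H_x$ is a PD-derivation and that $H_{\gamma_n(\mu)}=\gamma_{n-1}(\mu)\theta_\mu$. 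Since $\mathcal{J}$ is also skew-symmetric, it is a PD-derivation in each slot, and is therefore determined by its values on $(A\cup L)\times(A\cup L)$. On those pairs the vanishing $\mathcal{J}(x,y)=0$ amounts, via comparison of PD-derivations on $A$ and on $L$, exactly to the Jacobi identity of $L$, to the anchor being a homomorphism of Lie algebras ($[\lambda,\mu](a)=\lambda(\mu(a))-\mu(\lambda(a))$), and to the module compatibility — all built into the Lie--Rinehart structure. Hence $\mathcal{J}\equiv 0$ and $\{-,-\}$ is Poisson. The delicate part of the argument is precisely this organization: the PD-derivation reduction is what guarantees that the divided powers $\gamma_m(\lambda)$ with $m\geq 2$ never demand a separate combinatorial verification, so that the entire Jacobi identity collapses onto the axioms of $L$.
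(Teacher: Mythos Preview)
Your proof is correct and takes a genuinely different route from the paper's. The paper works with the explicit presentation $\Gamma_A L \simeq R/I$, where $R=A[\{x_{\lambda,n}\}]$ is a polynomial ring and $I$ is generated by the standard PD relations; it defines a Poisson bracket $\{-,-\}'$ directly on $R$ by prescribing it on the free generators, checks skew-symmetry and Jacobi on $R$, and then verifies that $I$ is a Poisson ideal so that the bracket descends. You instead work intrinsically in $\Gamma_A L$: you first extend each $(\lambda,[\lambda,-])$ and $(0,-\mu\mapsto -\mu(a))$ to PD-derivations $\theta_\lambda$, $\partial_a$ via the universal property, assemble the Hamiltonians $H_x$ from these, and then reduce Jacobi through the Jacobiator $\mathcal{J}(x,y)=H_{\{x,y\}}-[H_x,H_y]$, exploiting that $\mathcal{J}(x,-)$ is itself a PD-derivation so that everything collapses to the Lie--Rinehart axioms on $A\cup L$. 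The paper's approach is more elementary in that it never invokes an extension theorem for PD-derivations (it is entirely generators-and-relations), at the cost of more bookkeeping with the ideal $I$; your approach is more structural, makes transparent exactly which Lie--Rinehart identities are responsible for which Poisson axioms, and avoids ever lifting to the free polynomial ring. One small remark: the ``universal property'' you invoke for producing $\theta_\lambda$ and $\partial_a$ is not literally stated in \cite{roby_63} or \cite{bert_ogus} in the form ``a compatible pair $(d,\delta)$ extends to a PD-derivation''; it is of course an immediate consequence of the presentation (and is effectively what the paper checks), but you may want to make that reduction explicit rather than citing the references for it.
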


\begin{proof} We know (\cite[Chap. III, 1]{roby_63}) that $\Gamma_A L$ can be realized as the quotient of the polynomial algebra
$ R=A\left[ \{x_{\lambda,n}\}_{\lambda \in L, n\geq 0}\right] $ by
the ideal $I$ generated by the elements:
\begin{enumerate}
\item[(a)] $x_{\lambda,0}-1$,\quad $\lambda\in L$,
\item[(b)] $x_{a\lambda,m}-a^m x_{\lambda,m}$,\quad $\lambda\in L$, $a\in A$, $m\geq 0$,
\item[(c)] $x_{\lambda,m} x_{\lambda,n} - \binom{m+n}{m} x_{\lambda,m+n}$,\quad  $\lambda\in L$, $m,n\geq 0$,
\item[(d)] $\displaystyle x_{\lambda+\lambda',m} - \sum_{i+j=m} x_{\lambda,i} x_{\lambda',j}$,\quad  $\lambda,\lambda'\in L$, $m\geq 0$,
\end{enumerate}
and the maps $\gamma_n:L \to \Gamma_A L$ are given by
$\gamma_n(\lambda) = x_{\lambda,n} + I$. We consider $R$ as a graded
$A$-algebra, with $\deg(A) = 0$ and $\deg(x_{\lambda,m})=m$. The
ideal $I$ is clearly homogeneous and $\Gamma_A L$ is also a graded
$A$-algebra.
\medskip

We define a $k$-biderivation $\{-,-\}': R \times R \to R$ by:
\begin{enumerate}
\item[-)] $\{a,b\}'=0$ for all $a,b\in A$.
\item[-)] $\{a,x_{\lambda,m}\}' = -\{x_{\lambda,m},a\}' = - \lambda(a) x_{\lambda,m-1}$, for all $a\in A$, $\lambda\in L$ and $m\geq 0$, where we write  $x_{\lambda,-1}=0$.
\item[-)] $\{x_{\lambda,m},x_{\mu,n}\}' = x_{\lambda,m-1}\,
x_{\mu,n-1}\, x_{[\lambda,\mu],1}$  for all $\lambda,\mu\in L$ and
all $m,n\geq 0$.
\end{enumerate}
One can check that $\{r,r\}' = 0$ for all $r\in R$, and so
$\{-,-\}'$ is skew-symmetric, and that the Jacobi identity holds:
$$
\{r,\{s,t\}'\}' + \{s,\{t,r\}'\}' + \{t,\{r,s\}'\}' = 0
$$
for all $r,s,t\in R$. So $\{-,-\}'$ defines a Poisson structure on
$R$, which is clearly graded of degree $-1$.
\medskip

One can also check that $\{r,r'\}' \in I$ whenever $r\in I$ or
$r'\in I$, and so $\{-,-\}'$ passes to the quotient and defines a
Poisson structure $\{-,-\}$ on $\Gamma_A L$ satisfying properties
(i), (ii) and (iii). It is also graded of degree $-1$.
\medskip

Since $\Gamma_A L$ is generated as $\Z$-algebra by $a\in A$ and
$x_{\lambda,n}$ for $\lambda \in L, n\geq 0$, the above properties
determine $\{-,-\}$.
\end{proof}

\begin{prop} The maps of graded  $A$-algebras $\vartheta^f$ and $\vartheta^\infty$ above are
maps of Poisson algebras.
\end{prop}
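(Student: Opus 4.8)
The plan is to check the Poisson compatibility on a generating set. Write $L$ for $\IDer^f_k(A)$ (resp.\ $\IDer_k(A;\infty)$), so that $\Gamma_A L$ carries the bracket of Proposition~\ref{prop:poisson_Gamma} attached to the Lie--Rinehart structure supplied by Corollary~\ref{El conmutador es integrable}, while on $\gr\DD_{A/k}$ the bracket is $\{\sigma_d(P),\sigma_e(Q)\}=\sigma_{d+e-1}([P,Q])$. Since $\vartheta$ (standing for $\vartheta^f$ or $\vartheta^\infty$) is a map of graded $A$-algebras and both brackets are biderivations, for fixed $u$ the two assignments $v\mapsto\vartheta(\{u,v\})$ and $v\mapsto\{\vartheta(u),\vartheta(v)\}$ are $k$-derivations $\Gamma_A L\to\gr\DD_{A/k}$, the target being a $\Gamma_A L$-module through $\vartheta$; the same holds in the first variable. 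A standard two-step argument in each slot therefore reduces the identity $\vartheta(\{u,v\})=\{\vartheta(u),\vartheta(v)\}$ to the case where $u,v$ run over the $A$-algebra generators $a\in A$ and $\gamma_m(\lambda)$ ($\lambda\in L$, $m\ge 1$), and by skew-symmetry the mixed pair need only be treated once.

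The case $u,v\in A$ is trivial, both sides being zero. For $u=\gamma_m(\lambda)$ and $v=a\in A$ I would fix an integral $D$ of $\lambda$ (so $D_1=\lambda$); the Leibniz identity of $D$ yields the operator identity $[D_m,a]=\sum_{i=1}^m D_i(a)\,D_{m-i}$, in which only the summand $i=1$ has order $m-1$. Hence $\{\vartheta(\gamma_m(\lambda)),a\}=\sigma_{m-1}([D_m,a])=\lambda(a)\,\sigma_{m-1}(D_{m-1})=\vartheta(\lambda(a)\,\gamma_{m-1}(\lambda))$, which is exactly $\vartheta(\{\gamma_m(\lambda),a\})$ by property~(ii) of Proposition~\ref{prop:poisson_Gamma}.

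The essential case is $u=\gamma_m(\lambda)$, $v=\gamma_n(\lambda')$. Set $d_i=\sigma_i(D_i)$, $e_j=\sigma_j(E_j)$ for integrals $D,E$ of $\lambda,\lambda'$, and $c_1=\sigma_1([\lambda,\lambda'])$; by property~(iii) the claim becomes $\{d_m,e_n\}=d_{m-1}e_{n-1}c_1$ in $\gr\DD_{A/k}$, which I would prove by induction on $m+n$. The base case $m=n=1$ is immediate, since $\{d_1,e_1\}=\sigma_1([\lambda,\lambda'])=c_1$. For the inductive step I apply the Jacobi identity to $d_m,e_n$ and an arbitrary $a\in A$, expanding every bracket against an element of $A$ by the previous paragraph and the lower brackets $\{d_m,e_{n-1}\}$, $\{d_{m-1},e_n\}$ by the induction hypothesis; a direct manipulation collapses everything to the single assertion that $z:=\{d_m,e_n\}-d_{m-1}e_{n-1}c_1$ satisfies $\{z,a\}=0$ for all $a\in A$.

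It remains to conclude $z=0$, and this is where the scheme pays off. Since $z\in\gr^{m+n-1}\DD_{A/k}$ with $m+n-1\ge 1$, pick $Z\in\DD^{m+n-1}_{A/k}$ with $\sigma_{m+n-1}(Z)=z$; then $\{z,a\}=\sigma_{m+n-2}([Z,a])$, so the vanishing of all $\{z,a\}$ says precisely that $[Z,a]\in\DD^{m+n-3}_{A/k}$ for every $a$, i.e.\ that $Z$ has order $\le m+n-2$, whence $z=0$. This gives $\{d_m,e_n\}=d_{m-1}e_{n-1}c_1=\vartheta(\{\gamma_m(\lambda),\gamma_n(\lambda')\})$ and closes the induction. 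I expect this last case, and specifically the bookkeeping in the Jacobi expansion leading to $\{z,a\}=0$, to be the main obstacle: the naive reduction through the multiplicative relation $d_1d_{m-1}=m\,d_m$ breaks down exactly when $p\mid m$ in positive characteristic, whereas the Jacobi-plus-separation argument never divides by an integer. Finally, the argument applies verbatim to both $\vartheta^f$ and $\vartheta^\infty$, since only the finitely many components of length $\le m+n$ of the chosen integrals ever intervene.
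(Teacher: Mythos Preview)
Your proof is correct and follows essentially the same route as the paper: reduce by biderivation to the generators $a\in A$ and $\gamma_m(\lambda)$, dispatch the $(a,a')$ and $(\gamma_m(\lambda),a)$ cases directly, and settle the main case $(\gamma_m(\lambda),\gamma_n(\lambda'))$ by an induction on $m+n$. The only cosmetic difference is packaging: the paper isolates the inductive step as a separate operator-theoretic statement (Lemma~\ref{lem:aux}: $[D_m,D'_n]-D_{m-1}D'_{n-1}[D_1,D'_1]$ has order $\le m+n-2$, proved by induction on $m+n$), whereas you carry out the equivalent computation inside $\gr\DD_{A/k}$ via the Jacobi identity and the characterization of order through brackets with $a\in A$; your ``$\{z,a\}=0$ for all $a$ forces $z=0$'' step is exactly the inductive definition of order that underlies the paper's lemma.
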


\begin{proof} It is enough to treat the case of $\vartheta^f$.
It is clear that $\vartheta^f(\{a,a'\}) = 0 = \{a,a'\} =
\{\vartheta^f(a),\vartheta^f(a')\}$ for all $a,a'\in A$. It remains
to prove that:
\begin{enumerate}
\item[(a)] $\vartheta^f(\{\gamma_m(\delta),a\})  = \{\vartheta^f(\gamma_m(\delta)),a\}$ for all $\delta\in \IDer^f_k(A)$, all $a\in A$ and all $m\geq 1$.
\item[(b)] $\vartheta^f(\{\gamma_m(\delta),\gamma_n(\delta')\})  = \{\vartheta^f(\gamma_m(\delta)),\vartheta^f(\gamma_n(\delta'))\}$ for all
$\delta,\delta'\in \IDer^f_k(A)$ and all $m,n\geq 1$.
\end{enumerate}
For (a), let us take an $m$-integral $D\in\HS_k(A;m)$ of $\delta$. We
have:
\begin{align*}
& \vartheta^f(\{\gamma_m(\delta),a\}) = \vartheta^f(\delta(a)\,
\gamma_{m-1}(\delta))= \delta(a)\,\vartheta^f( \gamma_{m-1}(\delta))
= \delta(a)\, \sigma_{m-1}(D_{m-1}) = \sigma_{m-1}(D_1(a) D_{m-1}) =
&
\\
& \sigma_{m-1}([D_m,a]) = \{\sigma_m(D_m),a\} =
\{\vartheta^f(\gamma_m(\delta)),\vartheta^f(a)\}. &
\end{align*}
For (b), let us take an $m$-integral $D\in\HS_k(A;m)$ of $\delta$ and
an $n$-integral $D'\in\HS_k(A;m)$ of $\delta'$. We have:
\begin{align*}
& \{\vartheta^f(\gamma_m(\delta)),\vartheta^f(\gamma_n(\delta'))\} =
\{\sigma_m(D_m), \sigma_n(D'_n)\} = \sigma_{m+n-1}([D_m,D'_n]), &
\\
& \vartheta^f(\{\gamma_m(\delta),\gamma_n(\delta')\}) =
\vartheta^f(\gamma_{m-1}(\delta)\, \gamma_{n-1}(\delta')\,
\gamma_1([\delta,\delta'])) = \vartheta^f(\gamma_{m-1}(\delta))\,
\vartheta^f(\gamma_{n-1}(\delta'))\, \vartheta^f(
\gamma_1([\delta,\delta']))= &
\\
& \sigma_{m-1}(D_{m-1})\, \sigma_{n-1}(D'_{n-1})\,
\sigma_1([D_1,D'_1]) = \sigma_{m+n-1}(D_{m-1}\, D'_{n-1}\,
[D_1,D'_1]), &
\end{align*}
and the result is a consequence of Lemma \ref{lem:aux}.
\end{proof}

\begin{lem} \label{lem:aux}
For any HS-derivations $D\in\HS_k(A;m)$, $D'\in\HS_k(A;n)$, with
$m,n\geq 1$, the differential operator
$$ [D_m,D'_n] - D_{m-1} D'_{n-1} [D_1,D'_1]
$$
has order $\leq m+n-2$.
\end{lem}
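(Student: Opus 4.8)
The plan is to compare the two operators via their order\nobreakdash-$(m+n-1)$ symbols, using the iterated commutator as a computational model of the symbol. Both operators lie in $\DD^{m+n-1}_{A/k}$: each component $D_i$ of a HS\nobreakdash-derivation is a differential operator of order $\leq i$, so $[D_m,D'_n]$ has order $\leq m+n-1$ and the triple product $D_{m-1}D'_{n-1}[D_1,D'_1]$ has order $\leq (m-1)+(n-1)+1=m+n-1$. Since an operator $P\in\DD^r_{A/k}$ has order $\leq r-1$ precisely when the iterated commutator $[\cdots[[P,a_1],a_2],\dots,a_r]$ vanishes for all $a_1,\dots,a_r\in A$, it is enough to show that, for $r=m+n-1$, these iterated commutators coincide for our two operators.

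First I would record the relevant symbols. Writing $\partial_a(P):=[P,a]$, each $\partial_a$ is a derivation of the associative ring $\DD_{A/k}$ and the various $\partial_a$ commute with one another (Jacobi, together with $[a,b]=0$). From the Leibniz identity defining $D$ one gets $[D_i,a]=\sum_{j=1}^{i}D_j(a)\,D_{i-j}$, and iterating this $i$ times collapses, for degree reasons, to $\partial_{a_i}\cdots\partial_{a_1}(D_i)=D_1(a_1)\cdots D_1(a_i)=\delta(a_1)\cdots\delta(a_i)$, a multiplication operator (set $\delta=D_1$, $\delta'=D'_1$). This pins down the top symbol of every $D_i$ and $D'_j$.

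Next I would apply the commuting derivations $\partial_{a_1},\dots,\partial_{a_{m+n-1}}$ to $D_mD'_n$ and to $D'_nD_m$ through the multivariate Leibniz rule, keeping only the index splits that do not vanish for degree reasons (those placing $m$ indices on $D_m$ and $n-1$ on $D'_n$, or $m-1$ and $n$). The surviving part of the commutator is a sum of commutators $[\,\alpha_V,\ \Sigma_{n-1}(D'_n)\,]$ and $[\,\Sigma_{m-1}(D_m),\ \alpha'_{V'}\,]$, with $\alpha_V,\alpha'_{V'}$ multiplication operators and $\Sigma_{\bullet}$ of order $\leq 1$; evaluating each commutator by the first step and the product rule for $\delta,\delta'$ I expect to land on
$$\sum_{l}\ \sum_{\substack{S_1\sqcup S_2=\{1,\dots,m+n-1\}\setminus\{l\}\\ |S_1|=m-1,\ |S_2|=n-1}} [\delta,\delta'](a_l)\ \prod_{i\in S_1}\delta(a_i)\ \prod_{j\in S_2}\delta'(a_j).$$
Carrying out the same expansion for $D_{m-1}D'_{n-1}[\delta,\delta']$ — where now the only nonvanishing split puts $m-1$ indices on $D_{m-1}$, $n-1$ on $D'_{n-1}$, and a single index on $[\delta,\delta']$ — produces literally the same sum, and the two iterated commutators agree.

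The hard part will be the bookkeeping in this last step. The two middle terms of the commutator differ only in the order in which a scalar multiplication operator and an order\nobreakdash-one operator are composed, so the entire order\nobreakdash-$(m+n-1)$ contribution is a sum of such commutators; turning these into the symmetric sum above, and checking that the signs assemble the derivation\nobreakdash-on\nobreakdash-$a_l$ factor into $[\delta,\delta'](a_l)=\delta\delta'(a_l)-\delta'\delta(a_l)$, is the delicate point. (When $\Q\subset k$ everything shortcuts: there $\sigma_i(D_i)=\sigma_i(\delta^i)/i!$, so the identity follows from $\{\xi^m,\xi'^n\}=mn\,\xi^{m-1}\xi'^{n-1}[\xi,\xi']$ and the fact that the Poisson bracket is a biderivation, with $\xi=\sigma_1(\delta)$. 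The divided\nobreakdash-power behaviour of $\sigma_i(D_i)$ in characteristic $p$ is exactly what blocks this shortcut and forces the direct combinatorial computation.)
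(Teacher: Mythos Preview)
Your approach is correct and complete in outline: the iterated-commutator characterization of order, the fact that the $\partial_a=[-,a]$ are commuting derivations of $\DD_{A/k}$, the identity $\partial_{a_i}\!\cdots\partial_{a_1}(D_i)=\delta(a_1)\cdots\delta(a_i)$, and the multivariate Leibniz expansion together do force the $(m+n-1)$-fold commutator of $[D_m,D'_n]$ and of $D_{m-1}D'_{n-1}[\delta,\delta']$ to coincide with the symmetric sum you wrote down. The sign bookkeeping you flag as delicate really does work: the $|S|=m$ terms contribute $-\delta'\delta(a_l)$ factors and the $|S|=m-1$ terms contribute $+\delta\delta'(a_l)$ factors, which assemble into $[\delta,\delta'](a_l)$ exactly as needed.

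The paper's own proof is different in spirit: it proceeds by induction on $m+n$ (with details omitted). In that scheme one lowers the order by one via a single bracket with $a\in A$, uses the HS Leibniz relation $[D_m,a]=\sum_{j\geq 1}D_j(a)\,D_{m-j}$ and its analogue for $D'$, and invokes the inductive hypothesis on the resulting shorter HS-tails. Your route avoids the induction entirely by computing the full top symbol in one shot; this is more explicit and makes the combinatorics (the sum over $l$ and over partitions $S_1\sqcup S_2$) completely transparent, at the cost of a heavier Leibniz expansion. The inductive argument, by contrast, hides the combinatorics inside the recursion but needs less raw computation at each step. Either way the key input is the same HS Leibniz identity, so the two arguments are close cousins rather than unrelated proofs.
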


\begin{proof}
We proceed by induction on $m+n$. Details are left to the reader.
\end{proof}

\bigskip

\noindent {\small \href{http://personal.us.es/narvaez/}{Luis Narv\'aez Macarro}, Mar\'{\i}a de la Paz Tirado Hern\'andez\\
\noindent \href{http://departamento.us.es/da/}{Departamento de
\'Algebra} \&\
\href{http://www.imus.us.es}{Instituto de Matem\'aticas (IMUS)}\\
\href{http://matematicas.us.es}{Facultad de Matem\'aticas}, \href{http://www.us.es}{Universidad de Sevilla}\\
Calle Tarfia s/n, 41012  Sevilla, Spain} \\
{\small {\it E-mail}\ : narvaez@us.es, mtirado@us.es}

\end{document}